\numberwithin{equation}{section}
\theoremstyle{plain}
\newtheorem{thm}[equation]{Theorem}
\newtheorem{cor}[equation]{Corollary}
\newtheorem{lem}[equation]{Lemma}
\newtheorem{prop}[equation]{Proposition}
\newtheorem{claim}{Claim}
\theoremstyle{definition}
\theoremstyle{remark}
\newtheorem{rem}[equation]{Remark}
\title[Distinct distance three splittings]{A closed orientable 3-manifold with distinct distance three genus two Heegaard splittings}
\author{John Berge}
\email{jberge@charter.net}
\begin{document}

\begin{abstract}
We describe an example of a closed orientable 3-manifold $M$ with distinct distance three genus two Heegaard splittings. This demonstrates that the constructions of alternate genus two Heegaard splittings of closed orientable 3-manifolds described by Rubinstein and Scharlemann in their 1998 paper \cite{RS}, \emph{Genus Two Heegaard Splittings of Orientable 3-Manifolds}, does not yield all alternate genus two splittings, and must therefore be augmented by the constructions described in \cite{BS}.
\end{abstract}

\maketitle

\section{Introduction}
The main result of this paper shows there are closed orientable 3-manifolds which have distinct distance three genus two Heegaard splittings. On the other hand, it is shown in \cite{BS}, which corrects an omission in the 1998 paper \cite{RS} in which Rubinstein and Scharlemann applied their powerful method of sweepouts to the problem of classifying genus two Heegaard splittings of closed orientable 3-manifolds, that all of the constructions of potential alternate genus two Heegaard splittings described in \cite{RS} yield splittings of distance no more than two. It follows that the constructions of \cite{RS} must be augmented by those of \cite{BS} in order to obtain all possible alternate genus two splittings of a closed orientable 3-manifold.

\section{preliminaries} \label{preliminaries}

A Heegaard splitting $(\Sigma;V,W)$, of genus $g$, of a closed orientable 3-manifold $M$ consists of a closed orientable surface $\Sigma$, of genus $g$, and two handlebodies $V$ and $W$, such that $\Sigma$ = $\partial V$ = $\partial W$, $V \cap W = \Sigma$, and $M = V \cup W$. A Heegaard splitting $(\Sigma;V,W)$ is \emph{reducible} if there exists an essential separating simple closed curve in $\Sigma$ that bounds disks in both $V$ and $W$. A splitting $(\Sigma;V,W)$ is \emph{irreducible} if it is not reducible. A set $v$ of pairwise disjoint disks in a handlebody $V$ is a \emph{complete set of cutting disks} for $V$ if cutting $V$ open along the members of $v$ yields a 3-ball. If $v$ and $w$ are complete sets of cutting disks of $V$ and $W$ respectively, the set of simple closed curves $\partial v \cup \partial w$ in $\Sigma$ is a Heegaard diagram $(\Sigma;\partial v, \partial w)$ of $(\Sigma;V,W)$. The \emph{complexity} $c(\Sigma;\partial v, \partial w)$ of $(\Sigma;\partial v, \partial w)$ is the number of points in $\partial v \,\cap\, \partial w$. (We always assume this number has been minimized by isotopies of $\partial v$ and $\partial w$ in $\Sigma$.) 

The complete set of cutting disks $v$ of $V$ \emph{minimizes} the complete set of cutting disks $w$ of $W$ if $$c(\Sigma;\partial v,\partial w) \leq c(\Sigma;\partial v',\partial w),$$ for each complete set of cutting disks $v'$ of $V$. 

The complete set of cutting disks $v$ of $V$ is a set of \emph{universal minimizers} if for any complete sets of cutting disks $v'$ of $V$ and $w$ of $W$ $$c(\Sigma;\partial v,\partial w) \leq c(\Sigma;\partial v',\partial w).$$

A complete set of cutting disks $v$ of $V$ is a set of \emph{strict universal minimizers}, or set of SUMS, if for any complete sets of cutting disks $v'$ of $V$ and $w$ of $W$, with $v' \neq v$ $$c(\Sigma;\partial v,\partial w) < c(\Sigma;\partial v',\partial w).$$

A Heegaard diagram $(\Sigma;\partial v, \partial w)$ is \emph{minimal} if $$c(\Sigma;\partial v,\partial w) \leq c(\Sigma;\partial v',\partial w')$$ whenever $v'$ and $w'$ are complete sets of cutting disks of $V$ and $W$ respectively. (Note that if the complete set of cutting disks $v$ of $V$ is a set of SUMS, and $(\Sigma;\partial v', \partial w)$ is minimal, then $v = v'$.)

Two Heegaard splittings $(\Sigma;V,W)$ and $(\Sigma';V',W')$ of $M$ are \emph{homeomorphic} if there is a homeomorphism $ h: M \to M$ such that $h(\Sigma) = \Sigma'$. Two Heegaard diagrams $(\Sigma;\partial v, \partial w)$ and $(\Sigma';\partial v',\partial w')$ of $M$ are \emph{equivalent} if there is a homeomorphism $ h: \Sigma \to \Sigma'$ such that $h(\partial v) = \partial v'$ and $h(\partial w) = \partial w'$. 
Note that if $(\Sigma;\partial v, \partial w)$ and $(\Sigma';\partial v',\partial w')$ are equivalent Heegaard diagrams of $M$, then $(\Sigma;V,W)$ and $(\Sigma';V',W')$ are equivalent Heegaard splittings of $M$.

Finally, two splittings $(\Sigma;V,W)$ and $(\Sigma';V',W')$ of $M$ are \emph{isotopic} if there is an ambient isotopy of $M$ which carries $\Sigma$ to $\Sigma'$.

\begin{rem}
Methods for detecting the presence of a set of SUMS in one of the handlebodies of a genus two Heegaard splitting of a closed orientable 3-manifold $M$, arguments that the existence of a set of SUMS is a generic condition among genus two Heegaard splittings, and applications of the existence of a detectable set of SUMS to the problem of determining all alternative genus two splittings of $M$ will appear elsewhere.
\end{rem}

\begin{rem}
Note that we make extensive use of R-R diagrams. See \cite{B1} for some background material on these. 
\end{rem}

\subsection{The forms of graphs underlying genus two Heegaard splittings} \hfill 
\smallskip

It will be helpful to have a list of the types of graphs which can underlie genus two Heegaard diagrams. Figure \ref{Dist3Fig8b} displays the possible graphs. Lemmas \ref{3 types of graphs which occur} and \ref{3 types of graphs which don't occur} show these are the only possibilities. (We note a version of Lemma~\ref{3 types of graphs which occur} appears in \cite{HOT} where it is credited to \cite{O}.)

\begin{lem} \label{3 types of graphs which occur}
If $W$ is a genus two handlebody with a complete set of cutting disks $\{D_S,D_T\}$, and $\mathcal{C}$ is a set of disjoint essential simple closed curves in $\partial W$ such that each curve in $\mathcal{C}$ has only essential intersections with $D_S$ and $D_T$, and no curve in $\mathcal{C}$ is disjoint from both $D_S$ and $D_T$, then the Heegaard diagram of the curves in $\mathcal{C}$ with respect to $\{D_S,D_T\}$ has a graph $G_\mathcal{C}$ with the form of one of the three graphs in Figure \emph{\ref{Dist3Fig8b}}.
\end{lem}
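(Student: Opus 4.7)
The plan is to cut $W$ along $D_S\cup D_T$ and reduce the lemma to a classification of disjoint essential arcs on a 4-holed sphere. Because $\{D_S,D_T\}$ is complete, cutting $W$ along $D_S\cup D_T$ produces a 3-ball $B$; correspondingly, $\partial W$ cut along $\partial D_S\cup\partial D_T$ is a 4-holed sphere $P$ sitting in $\partial B$, whose boundary circles I label $S_+,S_-$ (the two copies of $\partial D_S$) and $T_+,T_-$ (the two copies of $\partial D_T$). Since each curve of $\mathcal{C}$ meets $D_S\cup D_T$ only in essential transverse intersections, $\mathcal{C}$ descends to a disjoint family $\mathcal{A}$ of essential arcs in $P$ with endpoints on $\partial P = S_+\cup S_-\cup T_+\cup T_-$. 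Each arc $\alpha\in\mathcal{A}$ contributes to $G_\mathcal{C}$ either a loop at $D_S$ (if both endpoints lie on $S_+\cup S_-$), a loop at $D_T$ (if both lie on $T_+\cup T_-$), or an $ST$-edge joining $D_S$ and $D_T$ (otherwise), so the form of $G_\mathcal{C}$ is determined by which of these three arc species appear in $\mathcal{A}$ and by how many parallelism classes of each appear.

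The core of the argument is a case analysis classifying the possible disjoint essential arc systems on $P$, broken down by which subset of $\{SS,\,TT,\,ST\}$ species appears in $\mathcal{A}$. I would use the planarity of $P$ together with the disjointness of $\mathcal{A}$---and the hypothesis that no curve of $\mathcal{C}$ is disjoint from both $D_S$ and $D_T$, which forces $\mathcal{A}$ to be nonempty on each connected component of $\mathcal{C}$---to pin down the compatible configurations. In each configuration the underlying graph $G_\mathcal{C}$, viewed up to isomorphism of multigraphs, is then read off and matched against Figure~\ref{Dist3Fig8b}.

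The main obstacle I foresee is handling the $ST$-arcs: whereas disjointness of $SS$-arcs (or of $TT$-arcs) essentially forces them into a single parallelism class, $ST$-arcs come in four endpoint-pair types and may appear in several mutually disjoint parallelism classes. The key substep will be to show that if an $SS$-arc and a $TT$-arc both appear in $\mathcal{A}$, the admissible $ST$-classes are cut down to just those whose reassembly into $\partial W$ yields the edge pattern of one of the graphs in Figure~\ref{Dist3Fig8b}; and that when only $ST$-arcs appear, the reassembled parallelism classes again yield one of the permitted graph forms. Once these compatibility constraints are established, the enumeration is finite and short, and the three graph shapes of the figure arise exactly.
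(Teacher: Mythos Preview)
Your setup is right---cutting $\partial W$ along $\partial D_S\cup\partial D_T$ gives a $4$-holed sphere $P$ with boundary circles $S^+,S^-,T^+,T^-$, and $\mathcal{C}$ becomes a system $\mathcal{A}$ of disjoint essential arcs---but you have misidentified $G_\mathcal{C}$. In this paper $G_\mathcal{C}$ is the \emph{four}-vertex planar graph with vertex set $\{S^+,S^-,T^+,T^-\}$ and one edge per arc of $\mathcal{A}$; this is explicit in Lemma~\ref{3 types of graphs which don't occur} and its proof, where ``$v\in\{S^+,S^-,T^+,T^-\}$'' is called a vertex of $G_\mathcal{C}$. Your description of $G_\mathcal{C}$ as a two-vertex multigraph (loops at $D_S$, loops at $D_T$, and $ST$-edges) collapses exactly the structure that the three pictures in Figure~\ref{Dist3Fig8b} record, so a case analysis by $SS/TT/ST$ ``species'' cannot distinguish the target graphs from one another, let alone match them.

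The second gap is the missing constraint. Planarity and disjointness alone do not do the work you claim; in particular they do not by themselves force all edges joining a fixed side of $D_S$ to a fixed side of $D_T$ to be parallel, nor do they force all $SS$-arcs into one parallel class. The paper's proof goes through Lemma~\ref{3 types of graphs which don't occur}, whose engine is the pair of valence equalities $V(S^+)=V(S^-)$ and $V(T^+)=V(T^-)$; these hold because each transverse crossing of $\partial D_S$ (resp.\ $\partial D_T$) by a curve of $\mathcal{C}$ deposits one arc endpoint on $S^+$ and one on $S^-$ (resp.\ on $T^+$ and $T^-$). That valence argument is what eliminates the configurations of Figure~\ref{Dist3Fig8c}, after which the enumeration really is short. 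Your plan can be salvaged by working with the four-vertex graph from the start and building the valence equalities into the case analysis, but as written the step that rules out the forbidden configurations is absent.
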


\begin{proof}
It is easy to enumerate the possibilities here using the result of Lemma \ref{3 types of graphs which don't occur}, which shows that all of the edges of $G_\mathcal{C}$ which connect a vertex of $G_\mathcal{C}$ corresponding to a given side of $D_S$ with a vertex of $G_\mathcal{C}$ corresponding to a given side of $D_T$, must be parallel.
\end{proof}

\begin{lem} \label{3 types of graphs which don't occur}
Suppose the hypotheses of Lemma \emph{\ref{3 types of graphs which occur}} hold. Then any two edges of $G_\mathcal{C}$ connecting $S^+$ \emph{(}resp $S^-$\emph{)} to $T^+$ \emph{(}resp $T^-$\emph{)} must be parallel.
\end{lem}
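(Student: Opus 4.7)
The plan is to argue by contradiction, eventually producing a bigon between some curve $c\in\mathcal{C}$ and one of $\partial D_S,\partial D_T$ in $\partial W$, which contradicts the essentiality hypothesis. Suppose two edges of $G_\mathcal{C}$ joining $S^+$ and $T^+$ are not parallel, and pick disjoint representative arcs $\alpha_1,\alpha_2$ in the cut surface $P:=\partial W\setminus(\partial D_S\cup\partial D_T)$, which is a $4$-holed sphere with boundary components $\partial S^\pm,\partial T^\pm$. Because $\alpha_1,\alpha_2$ are disjoint but not parallel in $P$, exactly one of the two components of $P\setminus(\alpha_1\cup\alpha_2)$ is an annulus $A$ whose inner boundary is $\partial S^-$ or $\partial T^-$; after interchanging $D_S$ and $D_T$ if necessary, I assume that inner boundary is $\partial S^-$, so the outer boundary of $A$ is the simple closed curve $\gamma:=\alpha_1\cup\beta_T\cup\alpha_2\cup\beta_S$, where $\beta_S\subset\partial S^+$ and $\beta_T\subset\partial T^+$ are the inner subarcs between the endpoints of $\alpha_1,\alpha_2$.

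The central observation is that every arc of $\mathcal{C}$ lying in $A$ has its endpoints confined to $\partial S^-\cup\beta_S\cup\beta_T$, since no arc of $\mathcal{C}$ can cross $\alpha_1$ or $\alpha_2$. In particular, the continuations of $\alpha_1$ and $\alpha_2$ past their $\partial S^+$-endpoints through the disk $D_S$ start at the matched points $x_1',x_2'\in\partial S^-$ (paired with the $\partial S^+$-endpoints of $\alpha_1,\alpha_2$ under the identification $\partial S^+\leftrightarrow\partial S^-$ induced by $D_S$) and so are trapped inside $A$. Essentiality rules out various $\partial$-parallel configurations inside $A$: any arc of some $c\in\mathcal{C}$ in $A$ that, together with a subarc of $\partial S^-$, $\beta_S$, or $\beta_T$, bounds a disk in $\partial W$ is a bigon between $c$ and $\partial D_S$ or $\partial D_T$, and so is forbidden.

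The heart of the argument will then be an innermost-disk analysis on the finite collection of arcs in $\mathcal{C}\cap A$, exploiting the pairing structure on $\partial D_S$. I would trace the continuations of $\alpha_1,\alpha_2$ alternately through $D_S$ and $D_T$: each continuation is trapped in $A$ (on the $S$-end) or in the complementary annulus $A^c$ (on the $T$-end), and these trapped arcs introduce further matching points on $\partial D_S$ that produce new constrained arcs in $A$. Since $A$ holds only finitely many arcs, the trace must close up or back-track, and in every combinatorial configuration an innermost arc should be forced into a $\partial$-parallel position, yielding the desired bigon. The main obstacle will be the bookkeeping in this descent — verifying that no configuration escapes the $\partial$-parallel conclusion — but the finiteness of $\mathcal{C}\cap A$ and the trapping by $\alpha_1,\alpha_2$ make the argument inherently finite. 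The symmetric case of non-parallel edges joining $S^-$ and $T^-$ follows by interchanging the roles of $+$ and $-$.
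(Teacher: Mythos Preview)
Your proposal has a genuine gap: the entire argument rests on the assertion that ``in every combinatorial configuration an innermost arc should be forced into a $\partial$-parallel position, yielding the desired bigon,'' but you never verify this, and you explicitly flag the required bookkeeping as an unfinished ``main obstacle.'' A proof plan that ends at the hard step is not yet a proof. In fact, the obstruction to non-parallel $S^+T^+$ edges is not really a hidden bigon at all; it is a \emph{valence mismatch}. Once you have split $P$ into the two annuli $A$ (containing $\partial S^-$) and $A^c$ (containing $\partial T^-$), the point is that every arc-endpoint on $\partial S^-$ lies in $A$, while endpoints on $\partial S^+$ are distributed between $A$ and $A^c$; reconciling these counts with the constraint that arcs in an annulus cannot loop on $\partial S^-$ (your bigon observation) is what actually fails. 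Your tracing scheme would, if carried out, amount to rediscovering this imbalance curve by curve, which is why it feels ``inherently finite'' but never quite closes.

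The paper's proof bypasses all of this with a one-line counting argument. If two non-parallel $S^+T^+$ edges exist, one checks that the remaining edges of $G_\mathcal{C}$ are forced into one of three specific patterns (the content of Figure~\ref{Dist3Fig8c}, with weights $a,b>0$ on the two $S^+T^+$ classes). For each pattern one simply computes the valences $V(S^+),V(S^-),V(T^+),V(T^-)$, i.e.\ the total number of edge-ends at each vertex; since every point of $\mathcal{C}\cap\partial D_S$ contributes one endpoint to $S^+$ and one to $S^-$, any graph coming from genuine curves must satisfy $V(S^+)=V(S^-)$ and $V(T^+)=V(T^-)$. A direct check shows these equalities fail in all three patterns unless $a=b=0$. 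That is the whole proof. I would recommend abandoning the trace-and-trap strategy and using this valence argument instead; the annulus decomposition you set up is exactly what makes the three patterns of Figure~\ref{Dist3Fig8c} exhaustive, so your first paragraph is salvageable as the enumeration step.
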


\begin{proof}
Suppose, to the contrary, that there are nonparallel edges in $G_\mathcal{C}$ connecting, say, $S^+$ to $T^+$. Then it is easy to see that $G_\mathcal{C}$ must have the form of one of the three graphs in Figure \ref{Dist3Fig8c} with $a > 0$ and $b > 0$. 
Next, if $v \in \{S^+,S^-,T^+,T^-\}$ is a vertex of $G_\mathcal{C}$, let $V(v)$ be the number of ends of edges of $G_\mathcal{C}$ which meet $v$. Then, if $G_\mathcal{C}$ is a graph underlying a genus two Heegaard diagram, the equations $V(S^+)$ = $V(S^-)$ and $V(T^+)$ = $V(T^-)$ must hold. However, one checks easily that these equations do not hold in Figures \ref{Dist3Fig8c}a, \ref{Dist3Fig8c}b or \ref{Dist3Fig8c}c unless $a = b = 0$.
\end{proof}

\begin{figure}[htbp]
\centering
\includegraphics[width = 1.0\textwidth]{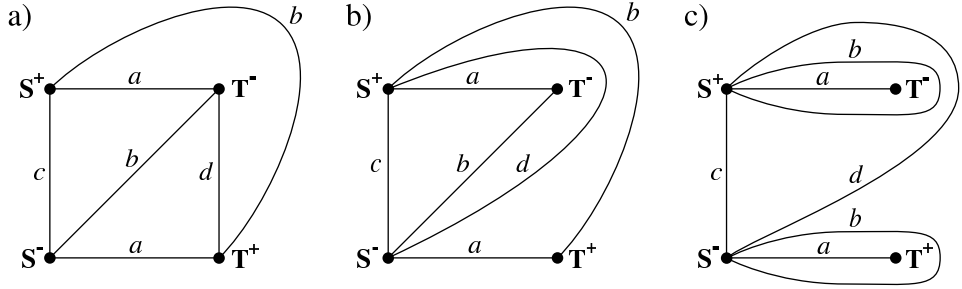}
\caption{If $W$ is a genus two handlebody with a complete set of cutting disks $\{D_S,D_T\}$, and $\mathcal{C}$ is a set of disjoint essential simple closed curves in $\partial W$ such that each curve in $\mathcal{C}$ has only essential intersections with $D_S$ and $D_T$, and no curve in $\mathcal{C}$ is disjoint from both $D_S$ and $D_T$, then the Heegaard diagram of the curves in $\mathcal{C}$ with respect to $\{D_S,D_T\}$ has a graph $G_\mathcal{C}$ with the form of one of these three graphs.}
\label{Dist3Fig8b}
\end{figure}

\begin{figure}[htbp]
\centering
\includegraphics[width = 1.0\textwidth]{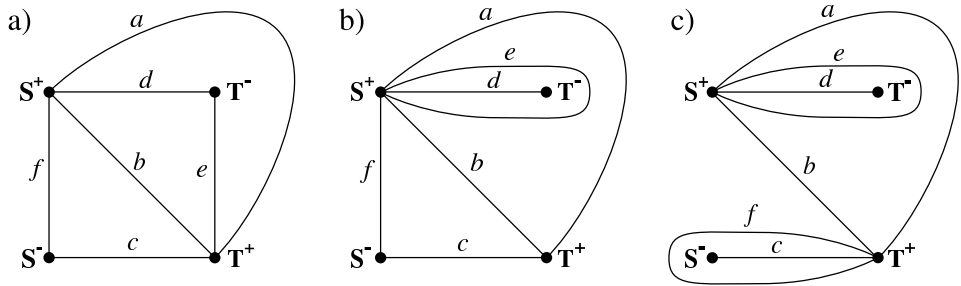}
\caption{If $W$, $\{D_S,D_T\}$, and $\mathcal{C}$ are as in Figure \ref{Dist3Fig8b}, then the Heegaard diagram of the curves in $\mathcal{C}$ with respect to $\{D_S,D_T\}$ does not have a graph $G_\mathcal{C}$ with the form of one of the three graphs in this figure unless $a = b = 0$.}
\label{Dist3Fig8c}
\end{figure}

\subsection{The distance of a Heegaard splitting} \hfill
\smallskip

Suppose $\Sigma$ is a Heegaard surface in a closed orientable 3-manifold $M$ in which $\Sigma$ bounds handlebodies $H$ and $H'$.
The Heegaard splitting of $M$ by $\Sigma$ is a splitting of \emph{distance} $n$ if there is a sequence $c_0, \dots, c_n$ of essential simple closed curves in $\Sigma$ such that:
\begin{enumerate}
	\item $c_0$ bounds a disk in $H$;
	\item $c_n$ bounds a disk in $H'$; 
	\item if $n > 0$, $c_i$ and $c_{i+1}$ are disjoint for $0 \leq i < n$; 
	\item $n$ is the smallest nonnegative integer such that (1), (2) and (3) hold.
\end{enumerate}

A Heegaard splitting of distance $0$ is \emph{reducible}. A splitting of distance
$1$ is \emph{weakly reducible}. Any Heegaard splitting of a reducible manifold is reducible. A Heegaard splitting of distance at most $2$ has the \emph{disjoint curve property} or DCP \cite{Th}; i.e. there is an essential nonseparating simple closed curve in $\Sigma$ which is disjoint from nonseparating properly embedded disks in both $H$ and $H'$. Any Heegaard splitting of a toroidal 3-manifold has the DCP (\cite{He},\cite{Th}). A weakly reducible genus two Heegaard splitting is also reducible, so an irreducible Heegaard splitting of genus two has distance at least two \cite{Th}.

\begin{rem}
If a closed orientable 3-manifold has a Heegaard splitting with distance at
least three, then it is irreducible, atoroidal, and it is not a Seifert manifold by Hempel \cite{He}, so by Perelman's proof of Thurston's Geometrization Conjecture, the manifold is hyperbolic.
\end{rem}

\section{Alternate genus two Heegaard splittings and (SF,PP) pairs}

Suppose $H$ is a genus two handlebody, and $\alpha$ is a nonseparating simple closed curve in $\partial H$. The curve $\alpha$ is \emph{Seifert Fiber} or \emph{SF} in $H$ if attaching a 2-handle to $H$ along $\alpha$ yields an orientable Seifert fibered space over the disk $D^2$ with 2 exceptional fibers.
A nonseparating simple closed curve $\beta$ in $\partial H$ is \emph{primitive} in $H$ if there exists a disk $D$ in $H$ such that $|\beta \cap D| = 1$. Equivalently $\beta$ is conjugate to a free generator of $\pi_1(H)$. The curve $\beta$ is a \emph{proper power} or \emph{PP} in $H$ if $\beta$ is disjoint from a separating disk in $H$, $\beta$ does not bound a disk in $H$, and $\beta$ is not primitive in $H$. 
A pair of disjoint nonseparating simple closed curves $(\alpha,\beta)$ in the boundary of a genus two handlebody $H$ is a $(Seifert Fiber, Proper Power)$ pair, or $(SF,PP)$ pair if attaching a 2-handle to $H$ along $\alpha$ yields an orientable Seifert fibered space over the disk $D^2$ with 2 exceptional fibers, and $\beta$ is a proper power of a free generator of $\pi_1(H)$.

\begin{rem}
Suppose $H$ is a genus two handlebody. Due to the work of Zieschang and others, nonseparating simple closed curves in $\partial H$ which are $SF$ curves are completely understood. (See the expository paper \cite{Z2} of Zieschang and its excellent bibliography.) Using this classification, it is not hard to show that if $\alpha$ is $SF$ in $\partial H$, then there exists a nonseparating curve $\beta$ disjoint from $\alpha$ such that $(\alpha, \beta)$ is a $(SF,PP)$ pair in $\partial H$, and the pair $(\alpha,\beta)$ has an R-R diagram with the form of Figure \ref{Dist3Fig16}a or \ref{Dist3Fig16}b.
\end{rem}

\begin{figure}[htbp]
\centering
\includegraphics[width = 1.0\textwidth]{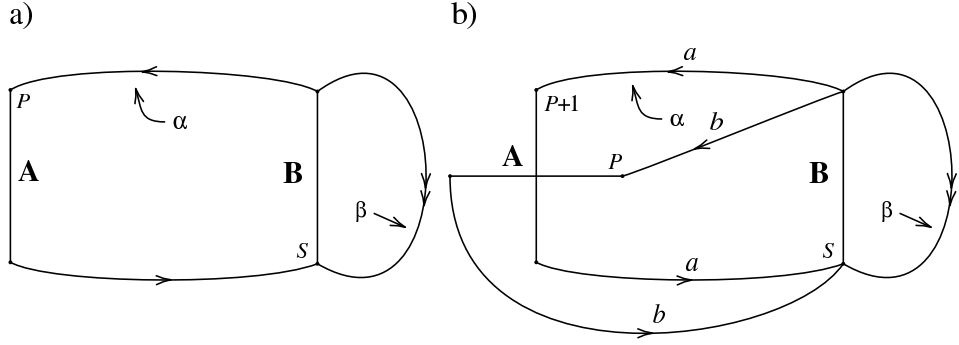}
\caption{If $(\alpha, \beta)$ is a $(SF,PP)$ pair on the boundary of a genus two handlebody $H$, then $\alpha$ and $\beta$ have an R-R diagram on $\partial H$ with the form of Figure \ref{Dist3Fig16}a or the form of Figure \ref{Dist3Fig16}b. Here, $|P|, |S| > 1$, $a, b > 0$, and $\gcd(a,b) = 1$.}
\label{Dist3Fig16}
\end{figure}

The following theorem explains our interest in $(SF,PP)$ pairs.

\begin{thm} \label{Alternative splittings from (SF,PP), (PP,SF) pairs}
Suppose $\Sigma$ is a genus two Heegaard surface bounding handlebodies $H$ and $H'$ in a closed orientable 3-manifold $M$. If $\alpha$ and $\beta$ are disjoint nonseparating simple closed curves in $\Sigma$ such that $(\alpha,\beta)$ is a $(SF,PP)$ pair in $H$ and a $(PP,SF)$ pair in $H'$, then $M$ has an alternative genus two Heegaard surface $\Sigma'$, which is not obviously isotopic to $\Sigma$.
\end{thm}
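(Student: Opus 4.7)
\smallskip\noindent\textbf{Proof plan.}
My strategy is to exploit the Seifert fibered structures on the abstract $2$-handle attachments $H \cup D_\alpha$ and $H' \cup D_\beta$ (guaranteed by the $SF$ hypotheses) to produce alternative complete cutting systems of $H$ and $H'$ that realize a second genus-two Heegaard splitting of $M$. The first step is to identify $\beta$ and $\alpha$ as regular fibers of the respective Seifert fibrations. Under the $(SF,PP)$ hypothesis, $N_\alpha := H \cup D_\alpha$ is Seifert fibered over $D^2$ with two exceptional fibers; I would verify that the curve $\beta$ lies on the boundary torus $\partial N_\alpha$ as a regular fiber. This should be read directly off the R-R diagram of $(\alpha,\beta)$ in Figure~\ref{Dist3Fig16}: the parameters $|P|,|S|>1$ record the multiplicities of the two exceptional fibers, while the condition $\gcd(a,b)=1$ forces the slope of $\beta$ on $\partial N_\alpha$ to coincide with the fiber slope of the induced Seifert structure. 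By the symmetric argument in $H'$, the curve $\alpha$ is a regular fiber of the Seifert fibration on $N_\beta := H' \cup D_\beta$.

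Next I would construct $\Sigma'$. Since $\beta$ is a regular fiber of the Seifert fibration on $N_\alpha$, there is a properly embedded vertical annulus in $N_\alpha$ separating the two exceptional fibers, with boundary two parallel regular fibers on $\partial N_\alpha$. This annulus can be arranged to miss the cocore of $D_\alpha$ and therefore sits in $H$ as a properly embedded annulus $A \subset H$ with $\partial A = \beta \cup \beta^* \subset \Sigma$, where $\beta^*$ is a parallel fiber. Analogously, one obtains $A' \subset H'$ with $\partial A' = \alpha \cup \alpha^* \subset \Sigma$. Using $A$ and $A'$ as templates for handle slides, I would modify the original complete cutting systems of $H$ and $H'$ to produce new complete cutting systems in which $\beta$ plays the role of a cutting curve on the $H$ side and $\alpha$ on the $H'$ side. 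The two new cutting systems determine a genus-two Heegaard surface $\Sigma' \subset M$ bounding modified handlebodies $H''$ and $H'''$ with $M = H'' \cup_{\Sigma'} H'''$.

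To see that $\Sigma'$ is not obviously isotopic to $\Sigma$, I would observe that in the R-R diagram associated to the new splitting the $SF$ and $PP$ roles of $\alpha$ and $\beta$ are interchanged with respect to the new handlebodies; any obvious ambient isotopy carrying $\Sigma$ to $\Sigma'$ would descend to an equivalence of the two R-R diagrams respecting these roles, which is precluded by the asymmetry of the Seifert invariants recorded in Figure~\ref{Dist3Fig16}. The main obstacle in this plan is Step~1: verifying that $\beta$ is a regular fiber of $H \cup D_\alpha$. This requires matching the combinatorial data of the R-R diagram (the parameters $|P|,|S|,a,b$) against the Seifert invariants of the induced fibration in both cases, depending on whether $(\alpha,\beta)$ realizes the pattern of Figure~\ref{Dist3Fig16}a or Figure~\ref{Dist3Fig16}b. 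Once this identification is in hand, the annular construction and handle slides of Step~2 reduce to standard handlebody manipulations.
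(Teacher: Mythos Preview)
Your plan is in the right neighborhood but takes an unnecessarily roundabout route and leaves the central construction vague. The paper's proof is much more direct because it uses the $PP$ hypothesis, not the $SF$ hypothesis, to produce the annuli. Since $\beta$ is a proper power in $H$, the two boundary components of a regular neighborhood $N_\beta \subset \Sigma$ already bound an essential separating annulus $\mathcal{A}_\beta$ in $H$; no appeal to the Seifert fibration or to ``arranging to miss the cocore of $D_\alpha$'' is needed, and no R-R diagram computation is required to identify $\beta$ as a regular fiber. Cutting $H$ along $\mathcal{A}_\beta$ yields a genus-two handlebody $H_\beta$ and a solid torus $V_\beta$; symmetrically, the $PP$ hypothesis on $\alpha$ in $H'$ gives $\mathcal{A}_\alpha \subset H'$, splitting $H'$ into $H'_\alpha$ and $V'_\alpha$. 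Only now does the $SF$ hypothesis enter: since $H[\alpha]$ is Seifert fibered over $D^2$ with two exceptional fibers and $\mathcal{A}_\beta$ is essential (hence vertical) in it, $H_\beta[\alpha]$ is Seifert fibered with one exceptional fiber, i.e.\ a solid torus, so $\alpha$ is primitive in $H_\beta$. Likewise $\beta$ is primitive in $H'_\alpha$. The new splitting is then simply the ``solid-torus swap'': $H_\beta \cup_{N_\alpha} V'_\alpha$ and $H'_\alpha \cup_{N_\beta} V_\beta$ are genus-two handlebodies (because one is gluing a solid torus along an annular neighborhood of a primitive curve), and their common boundary is $\Sigma'$.

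Compared to this, your Step~2 has two soft spots. First, your annulus is built on the $SF$ side and then pushed into $H$ by an unjustified ``miss the cocore'' move; this can be made to work, but only because the $PP$ annulus $\mathcal{A}_\beta$ already lives in $H$ and is vertical---so you would be reproving what the $PP$ hypothesis hands you for free. Second, ``handle slides so that $\beta$ plays the role of a cutting curve on the $H$ side'' is not a correct description of the outcome: $\beta$ is a proper power in $H$ and does not become a meridian there; what actually happens is the reassembly above, not a change of cutting system within $H$. Finally, the phrase ``not obviously isotopic to $\Sigma$'' in the statement is deliberately weak; the paper offers no argument for it, and your R-R asymmetry remark is not needed here (the genuine inequivalence is established later by comparing minimal complexities of the two splittings).
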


\begin{proof}
Let $N_\alpha$ and $N_\beta$ be disjoint regular neighborhoods of $\alpha$ and $\beta$ respectively in $\Sigma$. Since $\beta$ is a proper power in $H$, the boundary components of $N_\beta$ bound an essential separating annulus $\mathcal{A}_\beta$ in $H$, and cutting $H$ open along $\mathcal{A}_\beta$ cuts $H$ into a genus two handlebody $H_\beta$ and a solid torus $V_\beta$. Similarly, since $\alpha$ is a proper power in $H'$, the boundary components of $N_\alpha$ bound an essential separating annulus $\mathcal{A}_\alpha$ in $H'$, and cutting $H'$ open along $\mathcal{A}_\alpha$ cuts $H'$ into a genus two handlebody $H'_\alpha$ and a solid torus $V'_\alpha$.

Then $N_\alpha$ lies in $\partial H_\beta$, and $\alpha$ is primitive in $H_\beta$. And similarly, $N_\beta$ lies in $\partial H'_\alpha$, and $\beta$ is primitive in $H'_\alpha$. 
To see that $\alpha$ is primitive in $H_\beta$, let $H[\alpha]$ denote the manifold obtained by adding a 2-handle to $\partial H$ along $\alpha$. By hypothesis, $H[\alpha]$ is Seifert fibered over $D^2$ with two exceptional fibers. The annulus $\mathcal{A}_\beta$ is an essential separating annulus in $H[\alpha]$, which must be vertical in the Seifert fibration of $H[\alpha]$. This implies that $H_\beta[\alpha]$ is Seifert fibered over the disk $D^2$ with one exceptional fiber. So $H_\beta[\alpha]$ is a solid torus. This can occur only if $\alpha$ is primitive in $H_\beta$. Similarly, $\beta$ is primitive in $H_\alpha$.

Returning to the main argument, $N_\alpha$ lies in $\partial V'_\alpha$, and $N_\beta$ lies in $\partial V_\beta $. It follows that $H'_\alpha \cup_{N_\beta} V_\beta$ and $H_\beta \cup_{N_\alpha} V'_\alpha$ are each genus two handlebodies. Their common boundary is then an alternative genus two Heegaard surface $\Sigma'$ for $M$.
\end{proof}

\begin{rem}
The type of alternative genus two Heegaard splittings described in Theorem \ref{Alternative splittings from (SF,PP), (PP,SF) pairs}, which arise from $(\alpha,\beta)$ pairs in genus two Heegaard surfaces that are $(SF,PP)$ pairs in one of the handlebodies bounded by the surface and $(PP,SF)$ pairs in the other handlebody bounded by the surface, are exactly those which were overlooked in the classification  \cite{RS}.
\end{rem}

\newpage

\section{An example of distinct distance three genus two splittings}

\begin{figure}[htbp]
\centering
\includegraphics[width = 0.75\textwidth]{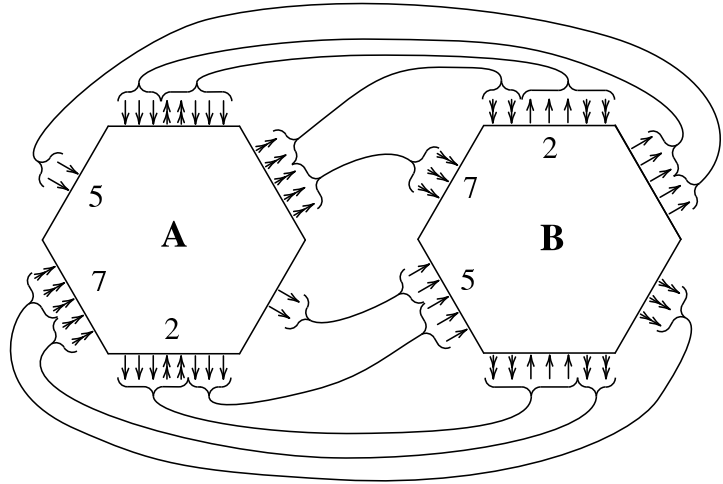}
\caption{An R-R diagram of the first Heegaard splitting of $M$.}
\label{Dist3Fig9a}
\end{figure}

\begin{figure}[htbp]
\centering
\includegraphics[width = 0.75\textwidth]{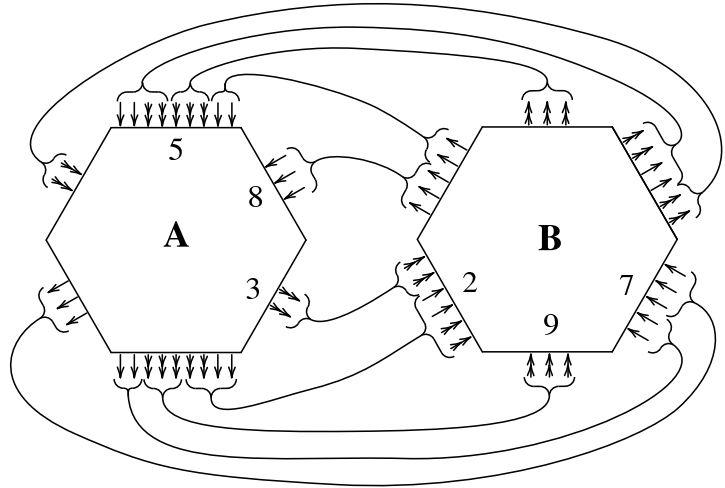}
\caption{An R-R diagram of the second splitting of $M$.} 
\label{Dist3Fig9b}
\end{figure}

\begin{figure}[htbp]
\centering
\includegraphics[width = 0.80\textwidth]{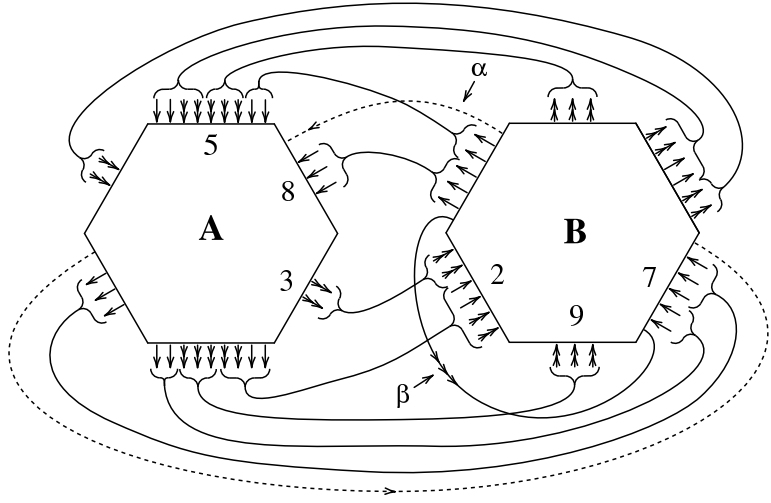}
\caption{The R-R diagram of the second splitting of $M$ from Figure~\ref{Dist3Fig9b} with an $(\alpha,\beta)$ pair which is $(SF,PP)$ in $H$ and $(PP,SF)$ in $H'$ added to the diagram. Here $(\alpha,\beta)$ = $(A^8B^7,B^7)$ in $\pi_1(H)$, while $(\alpha,\beta)$ = $(x^2,X^2Y^7)$ in $\pi_1(H')$.}
\label{Dist3Fig9d}
\end{figure}

\begin{thm} \label{main result}
The R-R diagrams in Figures \emph{\ref{Dist3Fig9a}} and \emph{\ref{Dist3Fig9b}} represent distinct genus two Heegaard splittings, each of distance \emph{$3$}, of a closed orientable 3-manifold $M$.
\end{thm}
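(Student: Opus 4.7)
The plan is to establish in sequence that (i) the two R-R diagrams present the same closed orientable 3-manifold $M$; (ii) each splitting has distance exactly $3$; and (iii) the two splittings are not isotopic in $M$.

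For (i), the key input is Theorem \ref{Alternative splittings from (SF,PP), (PP,SF) pairs} together with Figure \ref{Dist3Fig9d}. That figure exhibits on the Heegaard surface of the second splitting an $(\alpha,\beta)$ pair that is an $(SF,PP)$ pair in $H$ and a $(PP,SF)$ pair in $H'$, so Theorem \ref{Alternative splittings from (SF,PP), (PP,SF) pairs} immediately produces an alternative genus two Heegaard surface $\Sigma'$ for the same $M$. To identify $\Sigma'$ with the Heegaard surface of Figure \ref{Dist3Fig9a}, I would build the annuli $\mathcal{A}_\alpha$ and $\mathcal{A}_\beta$ explicitly from the R-R data, perform the swap $H'_\alpha \cup_{N_\beta} V_\beta$, $H_\beta \cup_{N_\alpha} V'_\alpha$, and read off the resulting R-R diagram, checking after a bounded sequence of handle slides that it matches Figure \ref{Dist3Fig9a}.

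For (ii), the lower bound distance $\geq 3$ is the technical core. Distance $0$ and $1$ are excluded for any irreducible genus two splitting by the result quoted at the end of Section \ref{preliminaries}. To rule out distance $2$, I would show that the cutting disks visible in each R-R diagram form sets of SUMS in their respective handlebodies, so that any hypothetical essential nonseparating curve $c \subset \Sigma$ witnessing the disjoint curve property can be chosen disjoint from the displayed cutting disks. Applying Lemmas \ref{3 types of graphs which occur} and \ref{3 types of graphs which don't occur} to the graph of $c$ against the exhibited cutting disks in each handlebody restricts that graph to one of the three types of Figure \ref{Dist3Fig8b}; enumerating those types in the concrete pictures of Figures \ref{Dist3Fig9a} and \ref{Dist3Fig9b} should in every case force $c$ to be either inessential, separating, or to have an essential intersection with one of the cutting disks, a contradiction. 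For the upper bound distance $\leq 3$, I would exhibit a length-$3$ chain of disjoint essential curves $c_0,c_1,c_2,c_3$ on $\Sigma$ with $c_0$ bounding in $H$ and $c_3$ bounding in $H'$, which the R-R pictures make visible by inspection.

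For (iii), since both splittings have distance $3$, the remark in Section \ref{preliminaries} implies $M$ is hyperbolic, and hence by Mostow rigidity its mapping class group is finite and every self-homeomorphism is isotopic to an isometry. Any isotopy (or homeomorphism) between $(\Sigma;V,W)$ and $(\Sigma';V',W')$ would thus induce an equivalence between their R-R diagrams in the sense of Section \ref{preliminaries}, and I would conclude by reading off a combinatorial invariant --- for example the numerical parameters $|P|,|S|,a,b$ of an $(SF,PP)$ pair realized on $\Sigma$ as in Figure \ref{Dist3Fig16}, or the minimized intersection number of the Heegaard diagrams --- that visibly takes different values on the two pictures. The main obstacle I expect is step (ii), and more precisely the justification that the exhibited cutting disks form sets of SUMS: this is what collapses the enumeration of DCP candidates to a manageable finite check using Lemmas \ref{3 types of graphs which occur} and \ref{3 types of graphs which don't occur}, and it is the place where the R-R bookkeeping is most delicate.
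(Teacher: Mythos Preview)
Your outline tracks the paper's three-part structure, and your upper bound in (ii) and your eventual invariant in (iii) (the minimized intersection number) are exactly right. But there is a genuine gap in your lower bound for the distance.

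You write that once $\{D_A,D_B\}$ is shown to be a set of SUMS, ``any hypothetical essential nonseparating curve $c \subset \Sigma$ witnessing the disjoint curve property can be chosen disjoint from the displayed cutting disks.'' That implication is false. The SUMS condition says $\{D_A,D_B\}$ strictly minimizes intersection with every \emph{complete set of cutting disks} of $H'$; it says nothing about a single disk in $H$ or $H'$ that a DCP curve might be disjoint from, and it certainly does not force a DCP witness to miss $\partial D_A \cup \partial D_B$. The paper's argument (Theorem~\ref{4 pairs of rectangles}) is genuinely different: it isolates, in each once-punctured torus $F_P^+$ cut off by $\Gamma$, a pair of rectangles $R_{pq_1},R_{pq_2}$ with $|R_{pq_1}|-1>|R_{pq_2}|>1$, shows via Lemma~\ref{forms of graphs of curves bounding disks and disjoint from disks} that any $C_1$ bounding in $H'$ must cross both rectangles essentially, and then concludes that every connection of any $C_2$ disjoint from $C_1$ meets $\partial D_P$ at least twice. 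This simultaneously yields no-DCP and the SUMS property; the SUMS property is an \emph{output}, not an input, of the no-DCP argument. Your proposed enumeration via Lemmas~\ref{3 types of graphs which occur} and~\ref{3 types of graphs which don't occur} applies to the graph of $c$ against a fixed pair of cutting disks, but gives no control over which disk in $H$ the DCP curve is disjoint from, so the enumeration does not close.

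Two smaller remarks. In (i) the paper proceeds in the opposite direction from yours: it starts from Figure~\ref{Dist3Fig9a}, uses the $(\alpha,\beta)$ pair of Figure~\ref{Dist3Fig9c} (not Figure~\ref{Dist3Fig9d}), passes through an explicit genus-four presentation, destabilizes twice algebraically, and then \emph{derives} the R-R diagram of Figure~\ref{Dist3Fig9b}; your proposed geometric swap is fine in principle but the identification with the other figure is exactly the nontrivial content, and the paper's presentation-reduction makes this identification concrete. In (iii) the detour through Mostow rigidity is unnecessary and does not do the work you want: a homeomorphism of splittings carries minimal-complexity Heegaard diagrams to minimal-complexity Heegaard diagrams directly, with no hyperbolic geometry required. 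The paper simply shows (via SUMS on one side and Lemma~\ref{c > a+b > 0 and d > a+b > 0 imply minimal} on the other) that each splitting has a \emph{unique} minimal Heegaard diagram, and then compares complexities ($121$ versus $149$).
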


\begin{proof}
Corollary \ref{Splittings are distance three} shows the Heegaard splittings described by the R-R diagrams in Figures \ref{Dist3Fig9a} and \ref{Dist3Fig9b} are each distance three splittings, while Section \ref{Obtaining the second splitting} shows the splittings are both splittings of the same closed orientable 3-manifold $M$. Then Proposition \ref{splittings are not homeomorphic} finishes the proof by showing the splittings of $M$ described by the R-R diagrams in Figures \ref{Dist3Fig9a} and \ref{Dist3Fig9b} are not homeomorphic.
\end{proof}

Proofs of the results leading to Theorem~\ref{main result} occupy the bulk of the remainder of the paper. However, we start with two preliminary subsections. The first of these, Subsection~\ref{R-R diagram explanation} explains the R-R diagrams used in the remainder of the paper, such as Figures~\ref{Dist3Fig9a} and \ref{Dist3Fig9b}. The second subsection, Subsection~\ref{Rectangles in Sigma}, describes certain rectangles in the Heegaard surfaces of Figures~\ref{Dist3Fig9a} and \ref{Dist3Fig9b}, which are later used to show that sets of SUMS exist.

\subsection{A word about the R-R diagrams in Figures \ref{Dist3Fig9a} and \ref{Dist3Fig9b}} \hfill \label{R-R diagram explanation}
\smallskip

The R-R diagrams appearing in this and following sections differ slightly from those described in \cite{B1}, and those appearing in previous sections. This subsection aims to explain these diagrams.

Suppose $\Sigma$ is a genus two Heegaard surface bounding handlebodies $H$ and $H'$ in a closed orientable 3-manifold $M$, $\{D_A,D_B\}$ and $\{D_X,D_Y\}$ are complete sets of cutting disks of $H$ and $H'$ respectively, and $\Gamma$ is an essential separating simple closed curve in $\Sigma$ disjoint from $\partial D_A$ and $\partial D_B$. (Thus $\Gamma$ bounds a disk in $H$ which separates $D_A$ and $D_B$.) In addition, suppose $\partial D_X$ and $\partial D_Y$ have only essential intersections with $\partial D_A$, $\partial D_B$ and $\Gamma$, and both $D_X \cap \Gamma$ and $D_Y \cap \Gamma$ are nonempty. Finally, let $\mathcal{A}$ be a regular neighborhood of $\Gamma$ in $\Sigma$ chosen so that $\mathcal{A}$ is disjoint from $\partial D_A$ and $\partial D_B$, and let $F_A$ and $F_B$ be the two once-punctured tori components of $\Sigma - \text{int}(\mathcal{A})$ with $F_A$ and $F_B$ labeled so that $\partial D_A \subset F_A$ and $\partial D_B \subset F_B$. 

Next, let $\mathcal{S}$, $\mathcal{C}_A$, and $\mathcal{C}_B$ be the sets of arcs $(\partial D_X \cup \partial D_Y) \cap \mathcal{A}$, $(\partial D_X \cup \partial D_Y) \cap F_A$, and $(\partial D_X \cup \partial D_Y) \cap F_B$ respectively. Then R-R diagrams like Figures~\ref{Dist3Fig9a} and \ref{Dist3Fig9b} display the annulus $\mathcal{A}$, minus a point at infinity together with the arcs of $\mathcal{S}$ embedded in the plane $\mathbb{R}^2$ as the closure of the complement of a pair of disjoint hexagons $H_A$ and $H_B$.

This results in the identification of the boundaries $\partial F_A$ and $\partial F_B$ of the once-punctured tori $F_A$ and $F_B$ with $\partial H_A$ and $\partial H_B$ respectively. Let $G \in \{A,B\}$, and let $p$ and $q$ be two points of $(\partial D_X \cup \partial D_Y) \cap \partial F_G$. Then the identification of $\partial F_G$ with $\partial H_G$ has the following properties:

\begin{itemize}
\item The points $p$ and $q$ lie in the same face of $H_G$ if and only if $p$ and $q$ are endpoints of connections $\delta_p$ and $\delta_q$ respectively in $\mathcal{C}_G$ such that $\delta_p$ and $\delta_q$ are properly isotopic in $F_G$ under an isotopy that carries $p$ to $q$.

\item The points $p$ and $q$ lie in opposite faces of $H_G$ if and only if $p$ and $q$ are endpoints of connections $\delta_p$ and $\delta_q$ respectively in $\mathcal{C}_G$ such that $\delta_p$ and $\delta_q$ are properly isotopic in $F_G$ under an isotopy that does not carry $p$ to $q$.

\item Suppose $p$ and $q$ lie in opposite faces of $H_G$, and let $f_p$ and $f_q$ be the faces of $H_G$ containing $p$ and $q$ respectively. Then $|\mathcal{S} \cap f_p|$ = $|\mathcal{S} \cap f_q|$ = $n$, for some nonnegative integer $n$, and there exists a unique set $\Delta$ of $n$ disjoint properly embedded arcs in $H_G$ such that each member of $\Delta$ connects a point of $\mathcal{S} \cap f_p$ to a point of $\mathcal{S} \cap f_q$. Then $p$ and $q$ are endpoints of a connection in $\mathcal{C}_G$ if and only if $p$ and $q$ are connected by an arc in $\Delta$.
\end{itemize}

Once the proper isotopy classes of the connections $\mathcal{C}_G$ in $F_G$ have been determined as above, the only remaining problem is to specify the isotopy class of the simple closed curve $\partial D_G$ in $F_G$. We do this by putting a set of three integer labels next to three consecutive faces of $H_G$; so that one member of each pair of opposite faces of $H_G$ is labeled, and we interpret these integers as algebraic intersection numbers of oriented connections with an oriented simple closed curve $\partial D_G$. This is enough to completely specify the isotopy class of $\partial D_G$ in $F_G$. We note that, a priori, the three consecutive labels can be any 3-triple of integers of the form $(m,m+n,n)$ with $\gcd(m,n) = 1$; so that only two labels would suffice. However, three labels are often convenient.

Finally, suppose $f_A$ and $f_B$ are faces of $H_A$ and $H_B$ respectively, and let $\mathcal{S}' \subset \mathcal{S}$ be the set of arcs in $\mathcal{S}$ which connect points in $f_A$ to points in $f_B$. Then the arcs in $\mathcal{S}$ are parallel in $\mathcal{A}$, and in order to reduce the number of arcs which are displayed in an R-R diagram like Figure~\ref{Dist3Fig9a} or \ref{Dist3Fig9b}, we often group the arcs of $\mathcal{S}'$ together with brackets, which are then connected by a singe arc in $\mathcal{A}$.

(Figure \ref{Dist3Fig7a} illustrates most of the points mentioned above.)

\begin{figure}[htbp]
\centering
\includegraphics[width = 1.00\textwidth]{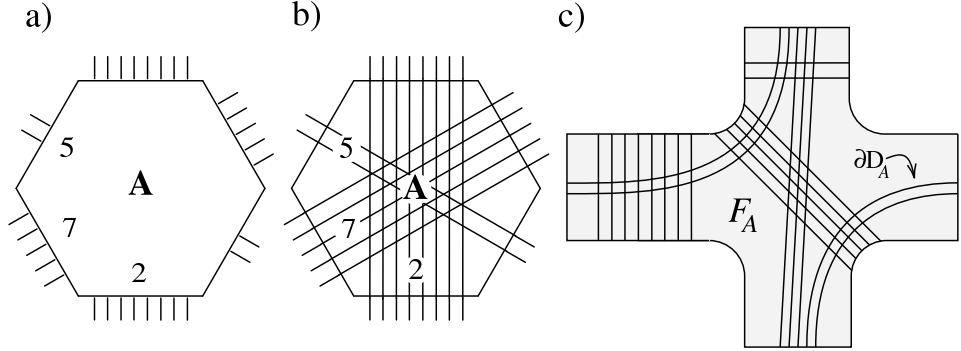}
\caption{Figures illustrating how the identification of $\partial F_A$ with the boundary of a hexagon in Figure \ref{Dist3Fig7a}a, together with the set of three integers placed near three consecutive faces of the hexagon in Figure \ref{Dist3Fig7a}a, encodes the embedding of the simple closed curve $\partial D_A$ and the connections of $(\partial D_X \cup \partial D_Y) \cap F_A$ in $F_A$ shown in Figure \ref{Dist3Fig7a}c. Here Figure~\ref{Dist3Fig7a}c shows the once-punctured torus $F_A$ cut open along a pair of essential arcs parallel to connections in $(\partial D_X \cup \partial D_Y) \cap F_A$. And Figure~\ref{Dist3Fig7a}b shows how points on opposite faces of the hexagon in Figure~\ref{Dist3Fig7a}a are joined by connections in $F_A$.}
\label{Dist3Fig7a}
\end{figure}

\subsection{Rectangles in $\boldsymbol{\Sigma}$} \label{Rectangles and SUMS} \hfill \label{Rectangles in Sigma}
\smallskip

The proofs that each of the Heegaard splittings described by the R-R diagrams of Figures \ref{Dist3Fig9a} and \ref{Dist3Fig9b} is a distance 3 splitting, and that the splittings of Figures~\ref{Dist3Fig9a} and \ref{Dist3Fig9b} are not homeomorphic, depends on the existence of certain rectangles in the genus two Heegaard surfaces of these diagrams. This subsection describes the rectangles we need.

Suppose $\Sigma$ is a genus two Heegaard surface in a closed orientable 3-manifold $M$ such that $\Sigma$ bounds genus two handlebodies $H$ and $H'$, $\{D_A,D_B\}$ and $\{D_X,D_Y\}$ are complete sets of cutting disks of $H$ and $H'$ respectively, and $\Gamma$ is an essential separating curve in $\Sigma$ which bounds a disk in $H$ separating $D_A$ and $D_B$. 

Assuming, as we may, that $\partial D_X$ and $\partial D_Y$ have only essential intersections with $\partial D_A$, $\partial D_B$ and $\Gamma$, suppose $\Gamma$ intersects both $\partial D_X$ and $\partial D_Y$. Then the curves $\Gamma$, $\partial D_X$ and $\partial D_Y$, cut $\Sigma$ into sets of faces, which are either four-sided, i.e. \emph{rectangles}, or have more than four sides. Let $\mathcal{R}$ denote the set of rectangles cut from $\Sigma$ by $\Gamma$, $\partial D_X$ and $\partial D_Y$. We are interested in four subsets of $\mathcal{R}$, which we denote by $\mathcal{R}_{ax}$, $\mathcal{R}_{ay}$, $\mathcal{R}_{bx}$, and $\mathcal{R}_{by}$. The meaning of the subscripts of these sets is as follows: The first letter of the 2-letter subscript $pq$ of the subset $\mathcal{R}_{pq}$ of $\mathcal{R}$ is $a$ (resp $b$) if each rectangle in $\mathcal{R}_{pq}$ lies on the same side of $\Gamma$ in $\Sigma$ as $\partial D_A$ (resp $\partial D_B$). The second letter of the 2-letter subscript $pq$ of the subset $\mathcal{R}_{pq}$ of $\mathcal{R}$ is $x$ (resp $y$) if each rectangle in $\mathcal{R}_{pq}$ has two subarcs of $\partial D_X$ (resp $\partial D_Y$) in its boundary. In addition, each rectangle $R_{pq}$ in $\mathcal{R}_{pq}$ intersects $\partial D_P$ in a number of essential arcs. In each such case, let $|R_{pq}|$ be the number of essential arcs in $R_{pq} \cap \partial D_P$. (It is possible that an $\mathcal{R}_{pq}$ is empty.)

Finally, we mention that a rectangle $R_{pq}$ with $p \in \{a,b\}$, $q \in \{x,y\}$, and $|R_{pq}| = e$ exists in the Heegaard surface of the R-R diagram in Figure~\ref{Dist3Fig9a} (resp Figure~\ref{Dist3Fig9b}), if and only if $\partial D_Q$ intersects the face of the $P$-hexagon in Figure~\ref{Dist3Fig9a} (resp Figure~\ref{Dist3Fig9b}), with label $e$ in two adjacent points.

\begin{thm} \label{4 pairs of rectangles}
Suppose that for each $\mathcal{R}_{pq} \in \{\mathcal{R}_{ax}, \mathcal{R}_{ay}, \mathcal{R}_{bx}, \mathcal{R}_{by}\}$ there exist rectangles $R_{pq_1} \in \mathcal{R}_{pq}$ and $R_{pq_2} \in \mathcal{R}_{pq}$ such that $|R_{pq_1}| - 1 > |R_{pq_2}| > 1$. Then the set of cutting disks $\{D_A,D_B\}$ of $H$ is a set of SUMS, and the Heegaard splitting has no disjoint curves.
\end{thm}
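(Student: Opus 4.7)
The plan is to reduce both conclusions to an intersection-number computation on $\Sigma$ carried out after cutting $H$ along $D_A$ and along $D_B$. For the SUMS conclusion, I would invoke the standard fact that any two complete cut systems of a genus-two handlebody are connected by a sequence of elementary moves, each replacing one cutting disk by a disk disjoint from the other disk of the current system. It therefore suffices to show that any cutting disk $D_A'$ of $H$ disjoint from $D_B$ and not isotopic to $D_A$ satisfies $|\partial D_A' \cap (\partial D_X \cup \partial D_Y)| > |\partial D_A \cap (\partial D_X \cup \partial D_Y)|$, together with the analogous statement exchanging $A$ and $B$.

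Cut $H$ along $D_B$ to obtain a solid torus $V_B$ with boundary torus $T_B$. The arcs of $(\partial D_X \cup \partial D_Y) \cap F_B$ sit on $T_B$ in parallel bundles whose widths, measured as intersection numbers with $\partial D_B$, are precisely the integers $|R|$ attached to the rectangles $R \in \mathcal{R}_{bx} \cup \mathcal{R}_{by}$. Cutting disks of $H$ disjoint from $D_B$ correspond to essential simple closed curves on $T_B$ avoiding the two disks $D_B^{+}$ and $D_B^{-}$ and bounding meridians of $V_B$; these are parametrized by their combinatorial wrapping data relative to $\partial D_A$, and $|\partial D_A' \cap (\partial D_X \cup \partial D_Y)|$ becomes a piecewise-linear function of this data. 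The inequality $|R_{bq_1}| - 1 > |R_{bq_2}| > 1$ for each $q \in \{x,y\}$ is exactly what forces the minimum of this function to be strict and attained only at $\partial D_A$: a nontrivial wrapping change may save at most $|R_{bq_2}|$ crossings by avoiding the narrow bundle, but must cost at least $|R_{bq_1}| \geq |R_{bq_2}|+2$ crossings through the wide bundle, for a net increase of at least $2$. The symmetric statement, replacing $D_B$ by some $D_B'$ disjoint from $D_A$, runs the same argument on the torus obtained by cutting along $D_A$ using the rectangles in $\mathcal{R}_{ax}$ and $\mathcal{R}_{ay}$. Together these yield SUMS.

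For the no-disjoint-curves conclusion, I would argue that if an essential simple closed curve $c \subset \Sigma$ were disjoint from essential disks in both $H$ and $H'$, then the SUMS property just established, combined with a wave-reduction analysis in a minimal diagram, forces $c$ to be isotopic off $\partial D_A \cup \partial D_B$ into one of the once-punctured tori $F_A$ or $F_B$, where $c$ becomes a slope. The same bundle-width inequalities then force every such slope to meet both $\partial D_X$ and $\partial D_Y$ essentially, so $c$ has too much intersection with $\partial D_X \cup \partial D_Y$ to be disjoint from any essential disk in $H'$, a contradiction. The hard part of the plan, I expect, is the lower-bound computation in the SUMS step: one must verify, case by case, that the inequality $|R|-1 > |R'| > 1$ in each of the four rectangle families is exactly tight enough to force a strict increase, with no possible compensating decrease of crossings elsewhere. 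The reduction to the torus model and the rectangle interpretation of bundle widths are routine; the real work is the careful PL bookkeeping against four families of parallel bundles. Once that is done, the no-disjoint-curves conclusion drops out as a short corollary.
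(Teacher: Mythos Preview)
There is a genuine gap in your SUMS argument: you are only comparing alternative cut systems of $H$ against the \emph{fixed} pair $\{D_X,D_Y\}$. The SUMS property requires $c(\Sigma;\partial\{D_A,D_B\},\partial w)<c(\Sigma;\partial v',\partial w)$ for \emph{every} complete set $w$ of cutting disks of $H'$, not just for $w=\{D_X,D_Y\}$. Your torus computation, even if it went through, would establish only that $\{D_A,D_B\}$ is the unique minimizer against $\{D_X,D_Y\}$; it says nothing about other cut systems of $H'$. (There is also a secondary issue: showing that each elementary move away from $\{D_A,D_B\}$ strictly increases complexity gives only a strict \emph{local} minimum in the cut-system graph, and you have not argued why that forces a strict global minimum.)

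The paper's proof addresses exactly this point by working from the $H'$ side. It starts with an \emph{arbitrary} pair $C_1,C_2$ of disjoint curves bounding disks in $H'$ and uses the classification of graphs underlying genus-two diagrams (Lemma~\ref{forms of graphs of curves bounding disks and disjoint from disks}) to show that $C_1$ must cross every rectangle in $\mathcal{R}_{ax}\cup\mathcal{R}_{bx}$ or every rectangle in $\mathcal{R}_{ay}\cup\mathcal{R}_{by}$. The hypothesis $|R_{pq_1}|-1>|R_{pq_2}|>1$ then forces every connection of $C_1\cup C_2$ in each once-punctured torus $F_P^+$ to meet $\partial D_P$ at least twice, with at least one meeting it more than twice; this yields the graph inequalities $c>a+b>0$, $d>a+b>0$ of Figure~\ref{Dist3Fig8b}a, and Lemma~\ref{c > a+b > 0 and d > a+b > 0 imply minimal} finishes. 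The key step you are missing is precisely this reduction of arbitrary disks in $H'$ back to the rectangle data attached to $\{D_X,D_Y\}$.

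Your sketch of the no-disjoint-curves argument inherits the same problem: you try to feed the SUMS conclusion back in, but SUMS concerns complete cut systems, not single curves, and in any case the SUMS argument itself is incomplete. In the paper this conclusion is obtained directly, before and independently of SUMS, from the same connection-counting: any $C_2$ disjoint from a disk $C_1$ of $H'$ already has connections in both $F_A^+$ and $F_B^+$ meeting $\partial D_A$ and $\partial D_B$ at least twice, so $G(D_A,D_B\,|\,C_2)$ has edges $A^+$--$A^-$ and $B^+$--$B^-$, which (again by Lemma~\ref{forms of graphs of curves bounding disks and disjoint from disks}) precludes $C_2$ from being disjoint from any disk in $H$.
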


\begin{proof}
Suppose $C_1$ and $C_2$ are a pair of disjoint nonseparating simple closed curves in $\Sigma$ such that $C_1$ bounds a disk in $H'$. We may assume $C_1$ and $C_2$ have only essential intersections with $\partial D_X$, $\partial D_Y$, $\partial D_A$, $\partial D_B$ and $\Gamma$. 

Consider the curve $C_1$. One possibility is that $C_1 \cap (\partial D_X \cup \partial D_Y)$ is nonempty. Suppose this is the case. Then Lemma \ref{forms of graphs of curves bounding disks and disjoint from disks} shows the graph $G(D_X,D_Y\,|\,C_1)$ of the Heegaard diagram of $C_1$ with respect to $D_X$ and $D_Y$ has the form of Figure \ref{Dist3Fig8b}c with $b > 0$, and with the pairs of vertices $\{S^+,S^-\}$ and $\{T^+,T^-\}$ of Figure \ref{Dist3Fig8b}c replaced by $\{X^+,X^-\}$ and $\{Y^+,Y^-\}$. 

Observe that if $\{S^+,S^-\}$ = $\{X^+,X^-\}$ and $\{T^+,T^-\}$ = $\{Y^+,Y^-\}$ in Figure \ref{Dist3Fig8b}c, then $C_1$ intersects every rectangle in $\mathcal{R}_{ay} \cup \mathcal{R}_{by}$ in an essential arc. On the other hand, if $\{S^+,S^-\}$ = $\{Y^+,Y^-\}$ and $\{T^+,T^-\}$ = $\{X^+,X^-\}$ in Figure \ref{Dist3Fig8b}c, then $C_1$ intersects every rectangle in $\mathcal{R}_{ax} \cup \mathcal{R}_{bx}$ in an essential arc.

The remaining possibility is that $C_1$ is disjoint from $\partial D_X \cup \partial D_Y$. In this case, either (2) or (3) of Lemma \ref{forms of graphs of curves bounding disks and disjoint from disks} applies. If $C_1$ is isotopic to $\partial D_X$, then $C_1$ intersects every rectangle in $\mathcal{R}_{ax} \cup \mathcal{R}_{bx}$ in an essential arc. If $C_1$ is isotopic to $\partial D_Y$, then $C_1$ intersects every rectangle in $\mathcal{R}_{ay} \cup \mathcal{R}_{by}$ in an essential arc. 
If $C_1$ is not isotopic to $\partial D_X$ or $\partial D_Y$, then, since $C_1$ is nonseparating in $\Sigma$, $C_1$ separates $X^+$ from $X^-$ and $Y^+$ from $Y^-$ in the graph $G(D_X,D_Y\,|\,C_1)$ of the Heegaard diagram of $C_1$ with respect to $D_X$ and $D_Y$. So, in this case, $C_1$ intersects every rectangle in $\mathcal{R}_{ay} \cup \mathcal{R}_{by}$, and every rectangle in $\mathcal{R}_{ax} \cup \mathcal{R}_{bx}$ in an essential arc.

Next we turn attention to the $H$ side of $\Sigma$. Here the simple closed curve $\Gamma$ cuts $\Sigma$ into two once-punctured tori, $F_A^+$ and $F_B^+$ such that $\partial D_A \subset F_A \subset F_A^+$, $\partial D_B \subset F_B \subset F_B^+$, and $\partial F_A^+$ = $\partial F_B^+$ = $\Gamma$. 

Let $P$ be either $A$ or $B$, and consider the once-punctured torus $F_P^+$. We have just observed that either $C_1$ intersects every rectangle in $\mathcal{R}_{ax} \cup \mathcal{R}_{bx}$ in an essential arc, or $C_1$ intersects every rectangle in $\mathcal{R}_{ay} \cup \mathcal{R}_{by}$ in an essential arc. If $C_1$ intersects every rectangle in $\mathcal{R}_{ax} \cup \mathcal{R}_{bx}$ in an essential arc, let $q = x$; otherwise, let $q = y$. In either case, the hypothesis of Theorem \ref{4 pairs of rectangles} guarantees there exists a pair of rectangles $R_{pq_1} \in \mathcal{R}_{pq}$ and $R_{pq_2} \in \mathcal{R}_{pq}$ such that $|R_{pq_1}| -1 > |R_{pq_2}| > 1$. Let $m$ = $|R_{pq_1}|$, and let $n$ = $|R_{pq_2}|$. Then $m -1 > n > 1$, and the configuration of $R_{pq_1}$, $R_{pq_2}$ and $\partial D_P$ in $F_P^+$ must be homeomorphic to that shown in Figure \ref{Dist3Fig22b}. Next, consider the set of connections $C_1 \cap F_P^+$, and observe that, since $C_1$ intersects both $R_{pq_1}$ and $R_{pq_2}$ in essential arcs, there exist connections $\omega_1$ and $\omega_2$ in $C_1 \cap F_P^+$ such that $\omega_1 \subset R_{pq_1}$ and $\omega_2 \subset R_{pq_2}$.

It is time to consider $C_2$. Note first that, because $C_1$ and $C_2$ are disjoint, and both $C_1 \cap F_A^+$ and $C_1 \cap F_B^+$ contain nonisotopic pairs of connections, $C_2$ can not lie completely in $F_A^+$ or $F_B^+$. So $C_2 \cap F_P^+$ is a set of connections in $F_P^+$. It follows that if $\delta$ is any connection in $C_2 \cap F_P^+$, then $\delta$ is properly isotopic in $F_P^+$ to one of the four connections $\delta_1$, $\delta_2$, $\delta_3$, $\delta_4$ shown in Figure \ref{Dist3Fig22b}. Then, since $m-1 > n > 1$, $\delta_1$, $\delta_2$, $\delta_3$ and $\delta_4$ intersect $\partial D_P$ respectively $m > 3$, $n > 1$, $m+n > 4$ and $m-n > 1$ times. 
In particular, each connection $\delta \in C_2 \cap F_P^+$ satisfies $|\delta \cap \partial D_P| \geq 2$. 

\begin{claim} \label{No disjoint curves in Sigma}
There are no disjoint curves in $\Sigma$. 
\end{claim} 
\begin{claim}\label{DA, DB is a set of SUMS}
The set of cutting disks $\{D_A,D_B\}$ of $H$ is a set of SUMS.
\end{claim}
\begin{proof} [Proof of Claim \emph{\ref{No disjoint curves in Sigma}}]
Since $C_2$ is an arbitrary nonseparating simple closed curve in $\Sigma$ disjoint from a disk in $H'$, it is enough to show that $C_2$ has essential intersections with every cutting disk of $H$. Lemma \ref{forms of graphs of curves bounding disks and disjoint from disks} shows that if $C_2$ is disjoint from a disk in $H$, then the graph $G(D_A,D_B \,|\, C_2)$ of the Heegaard diagram of $C_2$ with respect to $D_A$ and $D_B$ either has no edges connecting $A^+$ to $A^-$, or it has no edges connecting $B^+$ to $B^-$.

But, this is not the case. Since $C_2$ does not lie completely in $F_A^+$, or completely in $F_B^+$, there is a connection $\delta_A \in C_2 \cap F_A^+$ such that $|\delta_A \cap \partial D_A| \geq 2$, and there is a connection $\delta_B \in C_2 \cap F_B^+$ such that $|\delta_B \cap \partial D_B| \geq 2$. This implies there exist edges in 
$G(D_A,D_B \,|\, C_2)$ connecting $A^+$ to $A^-$, and there exist edges in $G(D_A,D_B \,|\, C_2)$ connecting $B^+$ to $B^-$. It follows that $C_2$ is not disjoint from a disk in $H$, and so there are no disjoint curves in $\Sigma$. This proves Claim \ref{No disjoint curves in Sigma}.
\end{proof}

\begin{proof} [Proof of Claim \emph{\ref{DA, DB is a set of SUMS}}]
Now suppose that in addition to being disjoint from $C_1$, $C_2$ bounds a cutting disk of $H'$. Then we may assume $C_1$ and $C_2$ bound an arbitrary complete set of cutting disks of $H'$. And then, by Lemma \ref{c > a+b > 0 and d > a+b > 0 imply minimal}, the complete set of cutting disks $\{D_A,D_B\}$ of $H$ will be a set of SUMS provided we can show that the graph $G(D_A,D_B \,|\, C_1,C_2)$ of the Heegaard diagram of $C_1$ and $C_2$ with respect to $D_A$ and $D_B$ has the form of Figure \ref{Dist3Fig8b}a with $c > a+b > 0$ and $d > a+b > 0$. We proceed to do this.

It was shown in the proof of Claim \ref{No disjoint curves in Sigma} that $G(D_A,D_B \,|\, C_2)$ has edges connecting $A^+$ to $A^-$ and edges connecting $B^+$ to $B^-$. So, since $G(D_A,D_B \,|\, C_2)$ is a subgraph of $G(D_A,D_B \,|\, C_1, C_2)$, the graph $G(D_A,D_B \,|\, C_1, C_2)$ also has edges connecting $A^+$ to $A^-$ and edges connecting $B^+$ to $B^-$. This implies $G(D_A,D_B \,|\, C_1, C_2)$ has the form of Figure \ref{Dist3Fig8b}a.

It remains to establish that $c > a+b > 0$ and $d > a+b > 0$ in Figure \ref{Dist3Fig8b}a. To see this, observe that $a+b$ in Figure \ref{Dist3Fig8b}a is equal to the number of connections in $(C_1 \cup C_2) \cap F_P^+$. Also observe that any connection in $C_1 \cap F_P^+$ is properly isotopic in $F_P^+$ to one of the four connections $\delta_1$, $\delta_2$, $\delta_3$, $\delta_4$ shown in Figure \ref{Dist3Fig22b}. Therefore, since $m-1 > n > 1$, $\delta_1$, $\delta_2$, $\delta_3$ and $\delta_4$ intersect $\partial D_P$ respectively $m > 3$, $n > 1$, $m+n > 4$ and $m-n > 1$ times. In particular, each connection $\delta \in C_1 \cap F_P^+$ satisfies $|\delta \cap \partial D_P| \geq 2$. Since each connection $\delta \in C_2 \cap F_P^+$ also satisfies $|\delta \cap \partial D_P| \geq 2$, we have $c \geq a+b > 0$ and $d \geq a+b > 0$. However, there is also a connection $\omega_1 \in C_1 \cap F_P^+$ with $|\omega_1 \cap \partial D_P| > 2$. This implies $c > a+b > 0$ and $d > a+b > 0$, which is what we need. This proves Claim \ref{DA, DB is a set of SUMS}. \end{proof}
And also completes the proof of Theorem \ref{4 pairs of rectangles}. 
\end{proof}

\begin{lem} \label{forms of graphs of curves bounding disks and disjoint from disks}
Suppose $W$ is a genus two handlebody with a complete set of cutting disks $\{D_S,D_T\}$, $C_1$ and $C_2$ are a pair of disjoint nonseparating simple closed curves in $\partial W$ such that $C_1$ and $C_2$ have only essential intersections with $D_S$ and $D_T$, and $C_1$ bounds a disk in $W$. Let $G(D_S,D_T \,|\, C_1)$ and  $G(D_S,D_T \,|\, C_2)$ be the graphs of the Heegaard diagrams of $C_1$ and $C_2$ respectively with respect to $D_S$ and $D_T$. Then either:
\begin{enumerate}
\item $G(D_S,D_T \,|\, C_1)$ has the form of Figure \emph{\ref{Dist3Fig8b}c} with $b > 0$;
\item $C_1$ is isotopic to $\partial D_S$ or $\partial D_T$;
\item $C_1$ is a bandsum of $\partial D_S$ with $\partial D_T$, so $C_1$ appears as a simple closed curve which separates vertex $S^+$ from vertex $S^-$ and vertex $T^+$ from vertex $T^-$ in $G(D_S,D_T \,|\, C_1)$.
\end{enumerate}
In any case, either there are no edges in $G(D_S,D_T \,|\, C_2)$ connecting $S^+$ to $S^-$, or there are no edges in $G(D_S,D_T \,|\, C_2)$ connecting $T^+$ to $T^-$.
\end{lem}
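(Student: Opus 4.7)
The plan is to establish the trichotomy for $C_1$ first, then to derive the conclusion about $G(D_S,D_T\,|\,C_2)$ by a case analysis.

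For the trichotomy I would split on whether $C_1$ meets $\partial D_S \cup \partial D_T$. If $C_1 \cap (\partial D_S \cup \partial D_T) = \emptyset$, then $C_1$ lies in the four-holed sphere $\Sigma^{*} := \partial W \setminus N(\partial D_S \cup \partial D_T)$. An essential simple closed curve in $\Sigma^{*}$ is either boundary parallel to one of the four holes $S^{\pm}, T^{\pm}$ or is one of three essential separating curves; the separation dividing the four holes into the pair $\{S^+,S^-\}$ and $\{T^+,T^-\}$ would make $C_1$ separating in $\partial W$, which is excluded by hypothesis. This leaves the boundary-parallel classes (giving case~(2)) and the two remaining separating curves, which are bandsums of $\partial D_S$ with $\partial D_T$ (giving case~(3)). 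If instead $C_1$ meets $\partial D_S \cup \partial D_T$, Lemma~\ref{3 types of graphs which occur} forces $G(D_S,D_T\,|\,C_1)$ to take one of the three forms 8b.a, 8b.b, 8b.c. Using that $C_1$ bounds a disk $D \subset W$, I would place $D$ in general position with $D_S \cup D_T$ and take an outermost arc of $D \cap (D_S \cup D_T)$ on $D$; the resulting outermost wave disk gives a wave-type subarc of $C_1$, and combined with the vertex-valence bookkeeping used in Lemma~\ref{3 types of graphs which don't occur} this eliminates 8b.a and 8b.b, leaving only 8b.c with $b > 0$ --- that is, case~(1).

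For the ``in any case'' conclusion I would handle each case separately. In case~(2), if $C_1 \sim \partial D_S$, isotope $\partial D_S$ to a parallel copy of $C_1$, which is then disjoint from $C_2$; because minimal geometric intersection is an isotopy invariant, $|C_2 \cap \partial D_S| = 0$ in the assumed minimal position, so $G(D_S,D_T\,|\,C_2)$ has no edges at $S^{\pm}$ at all, in particular no $S^+S^-$ edge; analogously if $C_1 \sim \partial D_T$. In case~(3), $C_1$ splits $\Sigma^{*}$ into two pairs of pants, each bounded by a copy of $C_1$, one $S$-hole and one $T$-hole; every arc of $C_2$ in $\Sigma^{*}$ is disjoint from $C_1$, lies in one such pair of pants, and so has both endpoints on a single $S$-vertex and a single $T$-vertex, ruling out both $S^+S^-$ and $T^+T^-$ edges. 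In case~(1), the arcs of $C_1$ in $\Sigma^{*}$ realise the 8b.c picture; a direct inspection of that configuration shows that the components of $\Sigma^{*} \setminus C_1$ are arranged so that either no such component borders both of $\{S^+,S^-\}$ or no such component borders both of $\{T^+,T^-\}$. An arc of $C_2$ lies in one component, so $C_2$ cannot contribute $S^+S^-$ edges in the first alternative or $T^+T^-$ edges in the second, yielding the dichotomy.

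The main obstacle, to my mind, is case~(1) of the second part, together with the parallel step of ruling out 8b.a and 8b.b in the first part. Both rest on a careful combinatorial analysis of the disjoint arcs of $C_1$ in the four-holed sphere and of the regions they cut out, and on combining the simple-closed-curve and disk-bounding constraints for $C_1$ with the parallel-edge constraints from Lemma~\ref{3 types of graphs which don't occur}.
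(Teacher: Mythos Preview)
Your proposal is correct and follows essentially the same route as the paper: split on whether $C_1$ meets $\partial D_S\cup\partial D_T$, use an outermost-arc (wave) argument when it does, and then run a case analysis for $C_2$. Two places where the paper is more economical are worth noting. First, once the outermost subdisk gives an arc $\omega\subset C_1$ with both endpoints on the \emph{same} vertex of $G(D_S,D_T\,|\,C_1)$, the paper simply observes that only the graph of Figure~\ref{Dist3Fig8b}c among the three in Figure~\ref{Dist3Fig8b} carries such a loop; there is no need to invoke the valence bookkeeping of Lemma~\ref{3 types of graphs which don't occur}. Second, for the $C_2$ conclusion in case~(1) the paper does not analyze the full complement $\Sigma^*\setminus C_1$; it uses the single wave $\omega$ itself. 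If $\omega$ is based at $S^+$ or $S^-$, then in the four-holed sphere $\omega$ already separates $T^+$ from $T^-$ (this is exactly what the loop in the Figure~\ref{Dist3Fig8b}c configuration does), so any arc of $C_2$, being disjoint from $\omega$, cannot run from $T^+$ to $T^-$; symmetrically if $\omega$ is based at $T^\pm$. Your argument via components of $\Sigma^*\setminus C_1$ reaches the same conclusion but with more to check.
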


\begin{proof}
Let $D_C$ be the disk which $C_1$ bounds in $W$, and suppose $C_1$ has essential intersections with $\partial D_S \cup \partial D_T$. Then $D_C \cap (D_S \cup D_T)$ is nonempty, and we may assume $D_C \cap (D_S \cup D_T)$ consists only of arcs of intersection. So there is an arc of intersection in $D_C \cap (D_S \cup D_T)$ which cuts off an outermost subdisk $D$ of $D_C$. Let $\omega$ be the arc $\partial D \cap C_1$. Then $\omega$ has both of its endpoints at the same vertex of $G(D_S,D_T \,|\, C_1)$. It follows that, in this case, $G(D_S,D_T \,|\, C_1)$ has the form of Figure~\ref{Dist3Fig8b}c with $b > 0$. On the other hand, if $C_1$ does not have essential intersections with $\partial D_S \cup \partial D_T$, it is clear that (2) or (3) holds. 

Turning to the form of $G(D_S,D_T \,|\, C_2)$, we see that if $C_1$ has essential intersections with $\partial D_S \cup \partial D_T$, and $\omega$ has both endpoints at $S^+$ or $S^-$ (resp $T^+$ or $T^-$), then there are no edges in $G(D_S,D_T \,|\, C_2)$ connecting $T^+$ to $T^-$ (resp $S^+$ to $S^-$).

On the other hand, if $C_1$ does not have essential intersections with $\partial D_S \cup \partial D_T$, so (2) or (3) holds, then again, either there are no edges in $G(D_S,D_T \,|\, C_2)$ connecting $S^+$ to $S^-$, or there are no edges in $G(D_S,D_T \,|\, C_2)$ connecting $T^+$ to $T^-$.
\end{proof}

\begin{lem} \label{c > a+b > 0 and d > a+b > 0 imply minimal}
Suppose $W$ is a genus two handlebody with a complete set of cutting disks $\{D_S,D_T\}$, and $\{C_1,C_2\}$ is a pair of disjoint essential simple closed curves in $\partial W$ such that the graph $G(D_S,D_T\,|\,C_1,C_2)$ of the Heegaard diagram of $C_1$ and $C_2$ with respect to $D_S$ and $D_T$ has the form of Figure \emph{\ref{Dist3Fig8b}a}, with $c > a+b > 0$ and $d > a+b > 0$. Then the set of cutting disks $\{D_S,D_T\}$ of $W$ is the one and only complete set of cutting disks of $W$ intersecting $C_1\cup C_2$ minimally.
\end{lem}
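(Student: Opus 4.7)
The plan is to show directly that any complete cutting system $\{D_S',D_T'\}$ of $W$ not equal (up to isotopy) to $\{D_S,D_T\}$ intersects $C_1\cup C_2$ strictly more than $\{D_S,D_T\}$ does. Reading off Figure \ref{Dist3Fig8b}a, the valences are $V(S^\pm)=a+b+c$ and $V(T^\pm)=a+b+d$, so the initial complexity is $n=2(a+b+c)+2(a+b+d)=4(a+b)+2c+2d$. I would invoke the standard fact that any two complete cutting systems of a genus two handlebody are connected by a finite sequence of elementary band moves, each of which replaces one of the two disks by a band sum of itself with the other along a properly embedded arc in $\partial W$ disjoint from both disk interiors. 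It therefore suffices to prove that a single such move strictly raises the complexity and that no further sequence of moves can restore it to $n$.

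For a single slide of $D_S$ over $D_T$ along an arc $\alpha$, producing $D_S'$, the new boundary $\partial D_S'$ is obtained by cutting $\partial D_S$ at the two endpoints of $\alpha$ and splicing in $\partial D_T$ via two parallel copies of $\alpha$. Counting intersections with $C_1\cup C_2$ and invoking the parallel-edge structure forced by Lemma \ref{3 types of graphs which don't occur}, the splice can cancel intersection points only through edges of $G(D_S,D_T\,|\,C_1,C_2)$ that lie in a single component of $\partial W\setminus(\partial D_S\cup\partial D_T)$ through which $\alpha$ can be routed; by Figure \ref{Dist3Fig8b}a this is at most $a+b$ edges on each of the two sides of $\partial D_S$, yielding a total cancellation of at most $2(a+b)$ intersection points. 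Hence the new complexity is at least
\[
n+2(a+b+d)-2(a+b)=n+2d>n,
\]
using $d>a+b>0$. The symmetric argument for sliding $D_T$ over $D_S$ uses $c>a+b>0$ to give an increment of at least $2c>0$.

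The main obstacle is extending this strict increase through an arbitrary sequence of moves, since a priori a subsequent slide could lower the complexity again. To address this I would argue that after one slide the resulting graph $G(D_S',D_T\,|\,C_1,C_2)$ inherits a comparable inequality between its cross-edge count and the loop counts at its vertices, so the single-move estimate can be re-applied to control further slides. Equivalently, the per-move bound shows that any non-minimizing cutting system admits an inverse slide strictly decreasing the complexity; since the graph of cutting systems under band moves is connected, iterating the inverse slides drives any such system down to $\{D_S,D_T\}$ along a strictly monotone path. Uniqueness of the minimizer follows, finishing the proof.
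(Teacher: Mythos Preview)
Your single-move estimate is essentially the same computation the paper makes: any bandsum of $D_S$ and $D_T$ meets $C_1\cup C_2$ strictly more than either disk, because the cancellation along the band is bounded by $a+b$ on each side while the added copy of the other boundary contributes $c$ or $d$. The paper uses exactly this observation.

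The gap is in your extension to arbitrary cutting systems. You acknowledge the obstacle and then offer two sketches, neither of which works as written. First, the claim that after one slide the new graph ``inherits a comparable inequality'' is unsupported and in fact need not hold: after a slide the graph $G(D_S',D_T\,|\,C_1,C_2)$ can have a quite different shape, and you have given no mechanism to propagate the inequalities $c>a+b$, $d>a+b$ to the new parameters. Second, your ``equivalently'' is not an equivalence: you proved that every band move \emph{away from} $\{D_S,D_T\}$ raises complexity, but from this it does not follow that every other cutting system admits a band move \emph{toward lower} complexity. Connectivity of the slide graph gives you a path from $\{D_1,D_2\}$ to $\{D_S,D_T\}$, but nothing forces that path to be monotone in complexity, so the argument does not close.

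The paper sidesteps this entirely with an outermost-arc argument. One supposes a competitor $\{D_1,D_2\}$ with complexity at most that of $\{D_S,D_T\}$ and, among such competitors, minimal intersection with $D_S\cup D_T$. If the two systems are disjoint one disk of $\{D_1,D_2\}$ is already a bandsum of $D_S$ and $D_T$, and your single-move inequality gives the contradiction directly. If they intersect, take an outermost subdisk $D$ (among all four disks) meeting $C_1\cup C_2$ minimally. If $D\subset D_S\cup D_T$, surgering $\{D_1,D_2\}$ along $D$ contradicts the chosen minimality; hence $D\subset D_1\cup D_2$, and surgering $\{D_S,D_T\}$ along $D$ produces a bandsum $D_S'$ of $D_S$ and $D_T$ with $|D_S'\cap(C_1\cup C_2)|\le|D_S\cap(C_1\cup C_2)|$, again contradicting $c>a+b$, $d>a+b$. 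This outermost-disk trick is precisely what replaces your missing induction.
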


\begin{proof}
Suppose, to the contrary, that there exists a complete set of cutting disks $\{D_1,D_2\}$ of $W$, with $\{D_1,D_2\}$ not isotopic to $\{D_S,D_T\}$ in $W$, such that the complexity of the Heegaard diagram $D(D_1,D_2\,|\,C_1,C_2)$ is less than or equal to the complexity of $D(D_S,D_T\,|\,C_1,C_2)$. Suppose furthermore, as we may, that among such complete sets of cutting disks of $H$, $\{D_S,D_T\}$ and $\{D_1,D_2\}$ intersect minimally.

If $\{D_S,D_T\}$ and $\{D_1,D_2\}$ are disjoint, then one of $\{D_1,D_2\}$, say $D_1$, is a bandsum of $D_S$ and $D_T$ in $W$, and 
$$|D_1 \cap (C_1\cup C_2)| \leq \max\{|D_A \cap (C_1\cup C_2)|,|D_B \cap (C_1\cup C_2)|\}.$$
However, because $c > a+b > 0$ and $d > a+b > 0$ in the graph $G(D_S,D_T\,|\,C_1, C_2)$, this is impossible. So $\{D_S,D_T\}$ and $\{D_1,D_2\}$ must have essential intersections.

We may assume disks in $\{D_S,D_T\}$ intersect disks in $\{D_1,D_2\}$ only in arcs. So some disks in $\{D_S,D_T,D_1,D_2\}$ contain outermost subdisks cut off by outermost arcs of intersection of disks in $\{D_S,D_T\}$ with disks in $\{D_1,D_2\}$. Among the set of outermost subdisks of the disks in $\{D_S,D_T,D_1,D_2\}$, let $D$ be one that intersects $C_1\cup C_2$ minimally. 

Note that if $D$ were a subdisk of $D_S$ or $D_T$, then $D$ could be used to perform surgery on one of $D_1$, $D_2$, leading to a contradiction of the assumed minimality properties of $\{D_1,D_2\}$ vis a vis $C_1\cup C_2$ or $\{D_S,D_T\}$. So $D$ must be a subdisk of $D_1$ or $D_2$. But then $D$ could be used to perform surgery on one of $\{D_S,D_T\}$, say $D_S$, yielding a cutting disk $D_S'$ of $W$, disjoint from $D_S$ and $D_T$, i.e. a bandsum of $D_S$ and $D_T$, such that 
$$|D_S' \cap (C_1\cup C_2)| \leq |D_S \cap (C_1\cup C_2)|.$$
However, as before, since $c > a+b > 0$ and $d > a+b > 0$ in the graph $G(D_S,D_T\,|\,C_1, C_2)$, this is impossible.
\end{proof}

\begin{cor} \label{Sets of SUMS exist}
The complete set of cutting disks $\{D_A,D_B\}$ of the handlebody $H$ in the Heegaard splittings described by the R-R diagrams in Figures \emph{\ref{Dist3Fig9a}} and \emph{\ref{Dist3Fig9b}} is a set of SUMS. 
\end{cor}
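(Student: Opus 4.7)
The plan is to apply Theorem \ref{4 pairs of rectangles} to each of the two R-R diagrams in turn. By that theorem, it suffices, for each diagram, to exhibit in each of the four sets $\mathcal{R}_{ax}$, $\mathcal{R}_{ay}$, $\mathcal{R}_{bx}$, $\mathcal{R}_{by}$ a pair of rectangles $R_{pq_1}, R_{pq_2}$ satisfying $|R_{pq_1}| - 1 > |R_{pq_2}| > 1$. So the task reduces entirely to a combinatorial inspection of the two R-R diagrams.

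The key tool is the criterion stated just before Theorem \ref{4 pairs of rectangles}: a rectangle $R_{pq}$ with $|R_{pq}| = e$ exists in the Heegaard surface of Figure \ref{Dist3Fig9a} (resp.\ Figure \ref{Dist3Fig9b}) if and only if $\partial D_Q$ meets the face of the $P$-hexagon labeled $e$ in two adjacent points. So my first step is, for each of the two diagrams and for each of the four choices $(p,q) \in \{a,b\} \times \{x,y\}$, to read off from the $P$-hexagon the two face-labels $e$ that $\partial D_Q$ hits in two adjacent points. These labels are exactly the values $|R_{pq_1}|, |R_{pq_2}|$ that can be realized in $\mathcal{R}_{pq}$.

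The second step is to verify in each of the eight cases (two diagrams $\times$ four sets) that among the realized labels one can pick two values $m > n$ with $m - 1 > n > 1$. This is a finite check, driven entirely by the numerical labels printed next to the faces of the $A$- and $B$-hexagons in Figures \ref{Dist3Fig9a} and \ref{Dist3Fig9b}, together with how $\partial D_X$ and $\partial D_Y$ enter those faces. Once the eight inequalities are established, Theorem \ref{4 pairs of rectangles} immediately yields that $\{D_A,D_B\}$ is a set of SUMS for $H$ in both splittings (and, as a bonus, that neither splitting admits disjoint curves).

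The main obstacle is purely diagrammatic bookkeeping rather than conceptual: one must correctly translate each bracketed bundle of arcs in $\mathcal{A}$ in Figures \ref{Dist3Fig9a} and \ref{Dist3Fig9b} into the count of how many times $\partial D_X$ and $\partial D_Y$ enter a given face of each hexagon, being careful to recognize when two entries are adjacent (as opposed to separated by an intervening arc of $\partial D_A$ or $\partial D_B$). Provided that reading is done correctly for both diagrams, the numerical inequalities $m-1 > n > 1$ fall out of the labels, and the corollary follows.
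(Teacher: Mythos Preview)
Your proposal is correct and follows exactly the paper's approach: recall the adjacency criterion for rectangles, inspect each diagram to find the required four pairs, and apply Theorem~\ref{4 pairs of rectangles}. One small slip in your final paragraph: on a face of the $P$-hexagon the question is whether two points of $\partial D_Q$ are separated by an intervening point of the \emph{other} curve $\partial D_{Q'}$ (with $\{Q,Q'\}=\{X,Y\}$), not by $\partial D_A$ or $\partial D_B$, which do not meet those faces.
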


\begin{proof}
First, recall, as mentioned before, that a rectangle $R_{pq}$ with $p \in \{a,b\}$, $q \in \{x,y\}$, and $|R_{pq}| = e$ exists in the Heegaard surface of the R-R diagram in Figure~\ref{Dist3Fig9a} (resp Figure~\ref{Dist3Fig9b}), if and only if $\partial D_Q$ intersects the face of the $P$-hexagon in Figure~\ref{Dist3Fig9a} (resp Figure~\ref{Dist3Fig9b}), with label $e$ in two adjacent points.

Then examination of Figures \ref{Dist3Fig9a} and \ref{Dist3Fig9b} shows that, in each case, there are four pairs of rectangles in the Heegaard surface which satisfy the hypothesis of Theorem \ref{4 pairs of rectangles}. It follows that, in each case, the set of cutting disks $\{D_A,D_B\}$ of the underlying handlebody $H$ is a set of SUMS.
\end{proof}

\begin{cor} \label{Splittings are distance three}
The Heegaard splittings in the R-R diagrams of Figures \emph{\ref{Dist3Fig9a}} and \emph{\ref{Dist3Fig9b}} are distance three splittings.
\end{cor}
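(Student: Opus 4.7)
The plan is to prove distance $\geq 3$ and distance $\leq 3$ separately, combining them to conclude that each of the two splittings has distance exactly three.

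For the lower bound, the hard work has essentially been packaged into the preceding results. I would first invoke Corollary \ref{Sets of SUMS exist}; the content of that corollary is that, in each of Figures \ref{Dist3Fig9a} and \ref{Dist3Fig9b}, the criterion noted in Subsection \ref{Rectangles in Sigma} can be read off the labels on the hexagons $H_A,H_B$ to show that each of the four families $\mathcal{R}_{ax},\mathcal{R}_{ay},\mathcal{R}_{bx},\mathcal{R}_{by}$ contains two rectangles $R_{pq_1},R_{pq_2}$ with $|R_{pq_1}|-1 > |R_{pq_2}| > 1$. Thus the hypothesis of Theorem \ref{4 pairs of rectangles} is satisfied for each diagram, and in its conclusion one finds not only that $\{D_A,D_B\}$ is a set of SUMS, but also that the splitting \emph{has no disjoint curves}. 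Since the preliminaries record that any Heegaard splitting of distance at most two enjoys the disjoint curve property (DCP), the absence of disjoint curves forces distance $\geq 3$ for each of the two splittings.

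For the upper bound, I would exhibit a sequence of four essential simple closed curves $c_0,c_1,c_2,c_3$ on $\Sigma$ with $c_0$ bounding a disk in $H$, $c_3$ bounding a disk in $H'$, and consecutive curves disjoint. Figure \ref{Dist3Fig9d} already supplies all the ingredients for the splitting of Figure \ref{Dist3Fig9b}: the pair $(\alpha,\beta)$ shown there is $(SF,PP)$ in $H$ and $(PP,SF)$ in $H'$, so $\beta$ is a proper power in $H$ and is therefore disjoint from the boundary of an essential separating disk $D_H\subset H$, while $\alpha$ is a proper power in $H'$ and is therefore disjoint from the boundary of an essential separating disk $D_{H'}\subset H'$. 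The sequence
\[
\partial D_H,\;\beta,\;\alpha,\;\partial D_{H'}
\]
is then a path of length three in the curve complex from a disk in $H$ to a disk in $H'$, giving distance $\leq 3$. For the first splitting (Figure \ref{Dist3Fig9a}) the same argument applies once an analogous $(SF,PP)/(PP,SF)$ pair is located in its R-R diagram; such a pair must exist because the relationship between the two splittings, established in the next section, is precisely the symmetric alternate-splitting construction of Theorem \ref{Alternative splittings from (SF,PP), (PP,SF) pairs}, so each splitting arises from the other by that construction and therefore carries the relevant pair.

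The main obstacle is the verification of the four rectangle inequalities in both R-R diagrams, which is the content of Corollary \ref{Sets of SUMS exist}; this is a direct but careful reading of the hexagon labels in Figures \ref{Dist3Fig9a} and \ref{Dist3Fig9b}. Once that inspection is complete, the distance-$\geq 3$ conclusion is immediate from Theorem \ref{4 pairs of rectangles} together with the DCP remark in the preliminaries, and the distance-$\leq 3$ half reduces to reading off the $(\alpha,\beta)$ pair already displayed in Figure \ref{Dist3Fig9d} and locating its analogue in the first diagram.
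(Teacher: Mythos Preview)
Your proposal is correct and follows essentially the same route as the paper. For the lower bound you invoke Theorem~\ref{4 pairs of rectangles} via the rectangle check (exactly as the paper does), and for the upper bound you use the $(SF,PP)/(PP,SF)$ pair to build a length-three path in the curve complex; the only cosmetic difference is that for the first splitting the paper points directly to Figure~\ref{Dist3Fig9c}, where the pair $(\alpha,\beta)$ is drawn explicitly, rather than arguing its existence indirectly from the symmetry of the construction in Section~\ref{Obtaining the second splitting}.
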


\begin{proof}
As, in Corollary \ref{Sets of SUMS exist}, examination of Figures \ref{Dist3Fig9a} and \ref{Dist3Fig9b} shows that, in each case, there are four pairs of rectangles in the Heegaard surface which satisfy the hypothesis of Theorem \ref{4 pairs of rectangles}. It follows that the Heegaard splittings described by the R-R diagrams in Figures \ref{Dist3Fig9a} and \ref{Dist3Fig9b} do not have the DCP, and so they are splittings of distance at least three.

On the other hand, Figures \ref{Dist3Fig9c} and \ref{Dist3Fig9d} show that, in each splitting, there exist disjoint nonseparating simple closed curves $\alpha$ and $\beta$ in the Heegaard surface which are $(SF,PP)$ pairs. This implies these splittings have distance at most three. Hence each of these splittings is a distance three splitting.
\end{proof}

\begin{figure}[htbp]
\centering
\includegraphics[width = .50\textwidth]{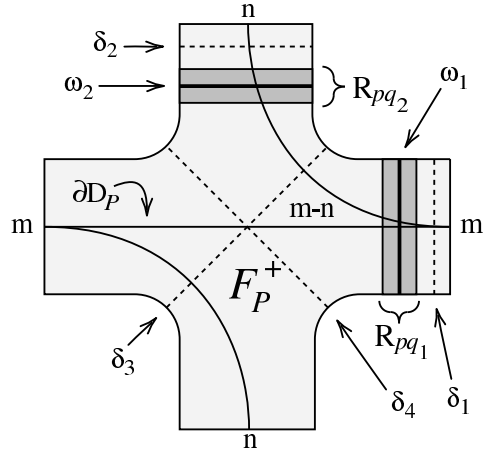}
\caption{Here $F_P^+$ is one of the two once-punctured tori in $\Sigma$ with boundary $\Gamma$. The figure shows $F_P^+$ cut open along a pair of properly embedded arcs parallel to edges of rectangles $R_{pq_1}$ and $R_{pq_2}$ in $\mathcal{R}_{pq}$ where $|R_{pq_1}|$ = $|R_{pq_1} \cap \partial D_P|$ = $m$, $|R_{pq_2}|$ = $|R_{pq_2} \cap \partial D_P|$ = $n$, and $m-1>n>1$. (It is always the case that $\gcd(m,n) = 1$).}
\label{Dist3Fig22b}
\end{figure}

\begin{section}{Deriving an R-R diagram of the second splitting of $M$ from the R-R diagram of the first splitting of $M$} \label{Obtaining the second splitting}

\begin{figure}[htbp]
\centering
\includegraphics[width = 0.85\textwidth]{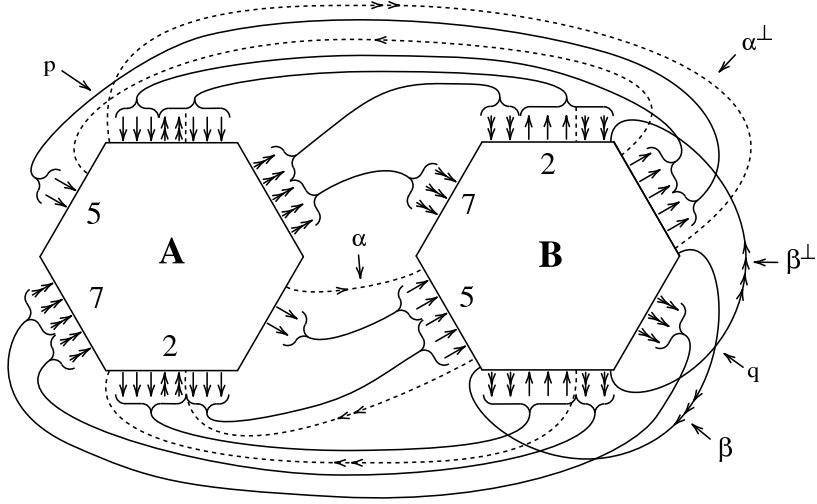}
\caption{The R-R diagram of the first splitting of $M$ in Figure~\ref{Dist3Fig9a} with four curves $\alpha$, $\alpha^\perp$, $\beta$ and $\beta^\perp$ added to the diagram. Here $\alpha$, $\alpha^\perp$, $\beta$ and $\beta^\perp$ represent $A^5B^5$, $b^5A^2B^2A^2$, $B^5$, and $B^2$ respectively in $\pi_1(H)$, while they represent $X^3$, $X^2$, $Y^5x^3Y^2$, and $x^5y^3$ respectively in $\pi_1(H')$. So $(\alpha,\beta)$ is a $(SF,PP)$ pair in $H$, and a $(PP,SF)$ pair in $H'$.}
\label{Dist3Fig9c}
\end{figure}

In order to obtain an R-R diagram of the second splitting of $M$ from the first splitting of $M$, we carry out the following three steps.

\begin{enumerate}
\item Obtain a geometric 4-generator, 4-relator presentation $\mathcal{P}$ of $\pi_1(M)$ from the diagram of the first splitting of $M$ in Figure \ref{Dist3Fig9a}.
\item Reduce the presentation $\mathcal{P}$ of step (1) to a 2-generator, 2-relator geometric presentation which has minimal length under automorphisms.
\item Produce an R-R diagram realizing the presentation $\mathcal{P}$ obtained in step (2).
\end{enumerate}

Figure \ref{Dist3Fig9c} shows the R-R diagram of the original splitting of $M$ in Figure \ref{Dist3Fig9a} with four simple closed curves $\alpha$, $\alpha_\perp$, $\beta$ and $\beta_\perp$ added to the diagram so that $\alpha$, $\alpha^\perp$, $\beta$ and $\beta^\perp$ represent $A^5B^5$, $b^5A^2B^2A^2$, $B^5$, and $B^2$ respectively in $\pi_1(H)$, while they represent $X^3$, $X^2$, $Y^5x^3Y^2$, and $x^5y^3$ respectively in $\pi_1(H')$. Thus $(\alpha,\beta)$ is a $(SF,PP)$ pair in $H$, and a $(PP,SF)$ pair in $H'$.

(Note that we adopt the space-saving convention of using pairs of uppercase and lowercase letters to denote generators and their inverses in free groups and the relators of presentations. So if $x$ is a generator of a free group, then $X = x^{-1}$.)

The curve $\beta^\perp$ has been chosen so that $\beta^\perp$ is disjoint from the separating curve $\Gamma$ in $\Sigma$, and so that $\beta$ and $\beta^\perp$ intersect transversely in a single point $q$. Then, in particular, $\beta$ and $\beta^\perp$ lie completely on the B-handle of Figure \ref{Dist3Fig9c}. 

The curve $\alpha^\perp$ has been chosen so that the pair $(\alpha,\alpha^\perp)$ has  properties with respect to the handlebody $H'$ analogous to those enjoyed by the pair $(\beta,\beta^\perp)$ with respect to $H$. That is: $\alpha$ and $\alpha^\perp$ intersect transversely once in a single point $p$, and they are both disjoint from a separating curve $\Gamma'$ in $\Sigma$ such that $\Gamma'$ bounds a disk in $\partial H'$ separating the cutting disks $D_X$ and $D_Y$ of $H'$. (Note $\Gamma'$ is not shown in Figure~\ref{Dist3Fig9c}.)

\subsection{Obtaining a genus four splitting of $\boldsymbol{M}$} \hfill
\smallskip

The separating disk in $H$ which $\Gamma$ bounds cuts $H$ into two solid tori $V_A$ and $V_B$, with meridional disks $D_A$ and $D_B$ respectively. Similarly, the separating curve $\Gamma'$, cuts $H'$ into two solid tori $V_X$ and $V_Y$, with meridional disks $D_X$ and $D_Y$ respectively. 

Let $v_B$ be a regular neighborhood in $V_B$ of a core of $V_B$. 
Then $V_B \setminus \text{int}(v_B)$ is homeomorphic to $\partial V_B \boldsymbol{\times} I$. And if $D_q$ is a small disk in $\partial V_B$ containing the point $q = \beta \cap \beta^\perp$, then $v_B$ can be attached to $H'$ by the one-handle $D_q \boldsymbol{\times} I$.

In similar fashion, let $v_X$ be a regular neighborhood in $V_X$ of a core of $V_X$. 
Then $V_X \setminus \text{int}(v_X)$ is homeomorphic to $\partial V_X \boldsymbol{\times} I$. And if $D_p$ is a small disk in $\partial V_X$ containing the point $p = \alpha \cap \alpha^\perp$, then $v_X$ can be attached to $H$ by the one-handle $D_p \boldsymbol{\times} I$. These two changes transform $H$ and $H'$ into genus four handlebodies $H_4$ and $H'_4$ giving a genus four Heegaard splitting of $M$.

\subsection{Locating complete sets of cutting disks of {\mathversion{bold}$H_4$} and {\mathversion{bold}$H'_4$}} \hfill
\smallskip

The next step is to locate complete sets of cutting disks of $H_4$ and $H'_4$. First, note that $D_X \cap v_X$ is a meridional disk of $v_X$. Next, note that $\beta \boldsymbol{\times} I$ and $\beta^\perp \boldsymbol{\times} I$ are annuli in $\partial V_B \boldsymbol{\times} I$, which become cutting disks $D_\beta$ and $D_{\beta^\perp}$ of $H_4$ when the one handle $D_q \boldsymbol{\times} I$ is attached to $H'$. Also note that $D_A$ is still a cutting disk of $H_4$. It follows that $H_4$ has a complete set of cutting disks comprised of $D_A$, $D_\beta$, $D_{\beta^\perp}$, and $D_X \cap v_X$.
Similarly, $H'_4$ has a complete set of cutting disks comprised of $D_Y$, $D_\alpha$, $D_{\alpha^\perp}$, and $D_B \cap v_B$.

\subsection{Obtaining an initial geometric presentation of {\mathversion{bold}$\pi_1(M)$}} \hfill
\smallskip

If we take generators $A$, $C$, $D$, and $E$ of $\pi_1(H_4)$ which are dual in $H_4$ to $D_A$, $D_\beta$, $D_{\beta^\perp}$, and $D_X \cap v_X$ respectively, then $\pi_1(M)$ has the geometric presentation
\begin{equation} \label{abstract pres}
\mathcal{P} = \langle A,C,D,E \,|\, \partial D_\alpha, \partial D_{\alpha^\perp},  \partial (D_B \cap v_B), \partial D_Y \rangle.
\end{equation}
The next step is to express the abstract relators of (\ref{abstract pres}) as cyclic words in the generators $A$, $C$, $D$, and $E$ of $\mathcal{P}$. 

To obtain the cyclic word which $\partial D_\alpha$ represents in $\pi_1(H_4)$, start at the point $p = \alpha \cap \alpha^\perp$ and proceed around $\alpha$ recording the oriented intersections of $\alpha$ with $\beta$, $\beta^\perp$, and $\partial D_A$, while ignoring intersections of $\alpha$ with $\partial D_X$ until returning to $p$. Then starting at $p$, retrace $\alpha$ in the opposite direction and record only the oriented intersections of $\alpha$ with $\partial D_X$ until returning to $p$. This yields $\partial D_\alpha$ = $A^5De^3$.

The cyclic word which $\partial D_{\alpha^\perp}$ represents in $\pi_1(H_4)$ can be obtained in the same way as that of $\partial D_\alpha$. This yields $\partial D_{\alpha^\perp}$ = $dA^2cA^2e^2$. (This works for $\partial D_\alpha$ and $\partial D_{\alpha^\perp}$ because the annulus in $D_X$ bounded by $\partial D_X$ and $\partial (D_X \cap v_X)$ lies in $H_4$.) 

It is fairly easy to find the cyclic word which $\partial (D_B \cap v_B)$ represents in $\pi_1(H_4)$ because $\partial (D_B \cap v_B)$ and $\partial D_B$ bound an annulus in $D_B$ which lies in $H_4$, and the curves $\beta$ and $\beta^\perp$ form a basis for a once-punctured torus in $\partial H_4$ in which $\partial D_B$ lies. It follows that the cyclic word which $\partial (D_B \cap v_B)$ represents in $\pi_1(H_4)$ can be obtained by traversing $\partial D_B$ in $\Sigma$ while recording the oriented intersections of $\partial D_B$ with $\beta$ and $\beta^\perp$. This yields $\partial (D_B \cap v_B)$ = $DC^2DC^3$.

Finally, it is straightforward to obtain the cyclic word which $\partial D_Y$ represents in $\pi_1(H_4)$ by proceeding around $\partial D_Y$ in Figure \ref{Dist3Fig9c} while recording the oriented intersections of $\alpha$ with $\beta$, $\beta^\perp$ and $\partial D_A$ in terms of the new set of generators $A$, $C$ and $D$ of $\pi_1(H_4)$. This yields $\partial D_Y$ = $A^7Dc(A^7DcA^7cA^2c)^2$.

Putting these pieces together yields the presentation
\begin{equation} \label{pres 1}
\mathcal{P} = \langle A,C,D,E \,|\, A^5De^3, dA^2cA^2e^2, DC^2DC^3, A^7Dc(A^7DcA^7cA^2c)^2 \rangle.
\end{equation}

\subsection{Reducing $\boldsymbol{\mathcal{P}}$ to a 2-generator, 2-relator geometric presentation} \hfill
\smallskip

Suppose $(\Sigma;V,W)$ is a Heegaard splitting, with $\Sigma$ bounding handlebodies $V$ and $W$. Recall that the splitting $(\Sigma;V,W)$ has a \emph{trivial handle} if there exist cutting disks $D_v$ of $V$ and $D_w$ of $W$ such that $D_v$ and $D_w$ intersect transversely in a single point. 

In order to obtain the second genus two splitting of $M$ from the genus four splitting by  $H_4$ and $H'_4$, we want to find two independent trivial handles, one involving  $D_\alpha$, and one involving $D_\beta$. Since $D_{\beta^\perp}$ is a disk in $H_4$, which meets $D_\alpha$ transversely at a single point, $D_\alpha$ and $D_{\beta^\perp}$ can be used as the first trivial handle. 

To eliminate the $(D_\alpha,D_{\beta^\perp})$ trivial handle, the cutting disks of $H'_4$, other than $D_\alpha$, need to have their intersections with $D_{\beta^\perp}$ removed by forming bandsums of these disks with $D_\alpha$ along arcs of $\partial D_{\beta^\perp}$. Algebraically, this amounts to changing presentation (\ref{pres 1}) by solving $\partial D_\alpha = A^5De^3 = 1$ for $D$, from which $D = a^5E^3$, then replacing all occurrences of $D$ in the other three relators of (\ref{pres 1}) with $a^5E^3$, and then dropping the generator $D$ and the relator $A^5De^3$ from (\ref{pres 1}). This changes (\ref{pres 1}) into the geometric presentation
\begin{equation} \label{pres 2}
\mathcal{P} = \langle A,C,E \,|\, A^7cA^2e^5, a^5E^3C^2a^5E^3C^3, A^2E^3c(A^2E^3cA^7cA^2c)^2 \rangle.
\end{equation}

Topologically, eliminating the trivial handle $(D_\alpha,D_{\beta^\perp})$, has \emph{destabilized} the genus four Heegaard splitting of $M$ by $H_4$ and $H'_4$ by turning it into a genus three Heegaard splitting of $M$ by genus three handlebodies $H_3$ and $H'_3$. Then, in this genus three splitting, $D_\beta$ and the cutting disk of $H'_3$ whose boundary represents the first relator $A^7cA^2e^5$ of (\ref{pres 2}) form a trivial handle. 

Algebraically, eliminating this trivial handle amounts to solving $A^7cA^2e^5 = 1$ in (\ref{pres 2}) for $C$, from which $C = A^2e^5A^7$ and $c = a^7E^5a^2$, then replacing $C$ and $c$ in the other two relators of (\ref{pres 2}) with $A^2e^5A^7$ and $a^7E^5a^2$ respectively, and then dropping the generator $C$ and the relator $A^7cA^2e^5$ from (\ref{pres 2}). This turns (\ref{pres 2}) into the geometric presentation 
\begin{equation}\label{pres 3}
\mathcal{P} = \langle A,E \,|\, A^9e^5(A^2E^3A^2e^5A^9e^5)^2, E^8a^7(E^8a^7E^5a^2E^5a^7)^2 \rangle.
\end{equation}

Finally, taking the inverse of the first relator, and replacing $E$ with $A$ and $a$ with $B$ in (\ref{pres 3}), turns (\ref{pres 3}) into
\begin{equation} \label{second pres}
\mathcal{P} = \langle A,B \,|\, A^8B^7(A^8B^7A^5B^2A^5B^7)^2, A^5B^9(A^5B^9A^5B^2a^3B^2)^2 \rangle. 
\end{equation}

\subsection{Obtaining an R-R diagram of a realization of $\boldsymbol{\mathcal{P}}$} \hfill
\smallskip

At this point, we have a genus two Heegaard splitting of $M$ by handlebodies $H_2$ and $H_2'$, which was obtained by eliminating a trivial handle from the genus three splitting of $M$ by $H_3$ and $H_3'$. Next, for simplicity, we drop the subscripts from $H_2$ and $H_2'$. So we have a genus two Heegaard surface $\Sigma$ bounding handlebodies $H$ and $H'$. Then there exist complete sets $\{D_A',D_B'\}$ and $\{D_X,D_Y\}$ of cutting disks of $H$ and $H'$ respectively such that the Heegaard diagram $D(D_A',D_B'\,|\,\partial D_X,\partial D_Y)$ of $\partial D_X$ and $\partial D_Y$ with respect to $D_A'$ and $D_B'$ realizes (\ref{second pres}). 

Note that, in general, there is nothing unique about Heegaard diagrams realizing (\ref{second pres}) since diagrams such as $D(D_A',D_B'\,|\,\partial D_X,\partial D_Y)$ can be modified by Dehn twists of $\partial D_X$ and $\partial D_Y$ about simple closed curves which bound disks in $H$. The following claim provides a way to deal with this minor annoyance. 

\begin{claim} \label{Min complexity realization}
There exists a complete set of cutting disks $\{D_A,D_B\}$ of $H$ such that the Heegaard diagram $D(D_A,D_B\,|\,\partial D_X,\partial D_Y)$ has both minimal complexity and 
realizes presentation \emph{(\ref{second pres})}.
\end{claim}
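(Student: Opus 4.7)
The plan is to show that the cutting disks $\{D_A', D_B'\}$ already produced by the construction in the previous subsections give a minimum-complexity diagram, so one can simply take $\{D_A, D_B\} = \{D_A', D_B'\}$ (after the standard isotopy into minimal position). The key dictionary to set up first is the following: once the curves are in minimal position, the complexity $c(\Sigma;\partial D_A \cup \partial D_B, \partial D_X \cup \partial D_Y)$ equals the total cyclic word length of the elements represented by $\partial D_X$ and $\partial D_Y$ in the free basis $(A,B)$ of $\pi_1(H)$ dual to $\{D_A,D_B\}$, since each geometric crossing contributes exactly one letter to the word.

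Under this dictionary, changing the complete set of cutting disks of $H$ from $\{D_A,D_B\}$ to a new system $\{D_A^*,D_B^*\}$ corresponds to applying an automorphism of $F_2 \cong \pi_1(H)$ to the generating pair, which in turn rewrites the pair of relator words. Hence minimizing $c(\Sigma;\partial v,\partial D_X\cup\partial D_Y)$ as $v$ ranges over complete sets of cutting disks of $H$ is the same as minimizing the total cyclic length of the relator pair of the presentation $\mathcal{P}$ over the orbit of $\mathrm{Aut}(F_2)$. By step (2) of the reduction procedure, presentation (\ref{second pres}) was chosen precisely so as to have minimum length under automorphisms of $F_2$. Therefore the complexity of $D(D_A',D_B'\,|\,\partial D_X,\partial D_Y)$, which equals the sum of the lengths of the relators of (\ref{second pres}), is already the minimum over all choices of cutting disks $v$ of $H$.

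The remaining subtlety, which I expect to be the main obstacle, is that the definition of ``minimal'' Heegaard diagram given in Section \ref{preliminaries} allows simultaneous variation of the cutting disks of both $H$ and $H'$, whereas the argument above only minimizes over the $H$ side with $\{D_X,D_Y\}$ fixed. To close this gap one can symmetrize: solve (\ref{second pres}) for a dual pair of generators of $\pi_1(H')$ in terms of $(A,B)$, verify that the resulting presentation viewed from the $H'$ side is likewise of minimum length under $\mathrm{Aut}(F_2)$, and conclude that no variation of $\{D_X,D_Y\}$ reduces complexity either. Putting the two halves together, the resulting diagram $D(D_A', D_B'\,|\,\partial D_X,\partial D_Y)$ is both minimal in the full sense and realizes (\ref{second pres}), and the choice $\{D_A,D_B\}:=\{D_A',D_B'\}$ proves the claim.
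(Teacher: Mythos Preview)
Your proposed dictionary---that after isotopy to minimal position one has
\[
c(\Sigma;\partial D_A' \cup \partial D_B',\, \partial D_X \cup \partial D_Y)\;=\;\text{(sum of cyclically reduced lengths of the relators of }\mathcal{P}\text{)}
\]
---is where the argument breaks. Each geometric crossing contributes one letter to the \emph{raw} word read off the diagram, so complexity always equals the raw word length; but the raw word length can strictly exceed the cyclically reduced length of the conjugacy class. Concretely, in the four-holed sphere obtained by cutting $\partial H$ along $\partial D_A'\cup\partial D_B'$, an arc of $\partial D_X$ may run from $A^{+}$ back to $A^{+}$ while enclosing, say, $B^{+}$; this contributes a cancelling pair $A\,A^{-1}$ to the raw word yet creates no bigon with $\partial D_A'$, so it survives isotopy to minimal position. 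Hence you cannot conclude that the particular system $\{D_A',D_B'\}$ produced by the destabilization already has complexity equal to the length of (\ref{second pres}). Indeed, the paragraph immediately preceding the Claim points out that diagrams realizing (\ref{second pres}) are non-unique because of Dehn twists about curves bounding disks in $H$; such twists alter the complexity without changing the reduced words.

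What is actually required is the theorem of Zieschang \cite{Z1}: if a family of simple closed curves on $\partial H$ has minimum total cyclic length in a basis $\{A,B\}$ of $\pi_1(H)$ (verified here via Lemma~\ref{min length under auts}), then there is a complete disk system dual to that basis whose geometric intersection with the curves equals that minimum length, and this disk system is unique when the minimal-length basis is. That is precisely the step the paper invokes and the step your argument omits. Your final paragraph, which proposes to upgrade to two-sided minimality by symmetrizing to the $H'$ side, is both not carried out and not needed: the Claim, and its only application in the proof of Claim~\ref{Graph of min complexity realization}, requires minimality only over disk systems of $H$ with $\{D_X,D_Y\}$ held fixed, which is exactly what Zieschang's theorem provides.
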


\begin{proof} [Proof of Claim \emph{\ref{Min complexity realization}}]
Lemma \ref{min length under auts} shows (\ref{second pres}) has minimal length under automorphisms of the free group $F(A,B)$, and it also shows $\{A,B\}$ is the only basis of $F(A,B)$ in which (\ref{second pres}) has minimal length. It then follows from the main result of \cite{Z1} that $H$ has a unique set of cutting disks $\{D_A,D_B\}$ such that $D(D_A,D_B\,|\,\partial D_X,\partial D_Y)$ has both minimal complexity and realizes (\ref{second pres}).
\end{proof}

\begin{claim} \label{Graph of min complexity realization}
The graph $G(D_A,D_B\,|\,\partial D_X,\partial D_Y)$ underlying $D(D_A,D_B\,|\,\partial D_X,\partial D_Y)$ has the form of Figure \emph{\ref{Dist3Fig8b}a}, with $c, d > 0$.
\end{claim}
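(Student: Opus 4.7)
The plan is to combine Lemma \ref{3 types of graphs which occur} with a direct inspection of the cyclic relators of (\ref{second pres}) supplied by Claim \ref{Min complexity realization}. Lemma \ref{3 types of graphs which occur} already forces $G(D_A,D_B\,|\,\partial D_X,\partial D_Y)$ to be one of the three shapes displayed in Figure \ref{Dist3Fig8b}, so the only task is to verify that the geometric data encoded by (\ref{second pres}) matches the shape in Figure \ref{Dist3Fig8b}a and produces $c,d>0$.

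The first step is to record the standard dictionary that recovers the edge multiplicities of $G(D_A,D_B\,|\,\partial D_X,\partial D_Y)$ from a cyclic relator: a maximal block $A^n$ or $a^n$ with $n\geq 2$ contributes $n-1$ parallel edges joining $A^+$ and $A^-$, a maximal block $B^n$ or $b^n$ with $n\geq 2$ contributes $n-1$ parallel edges joining $B^+$ and $B^-$, and every adjacent pair of letters of mixed $A$/$B$ type contributes one edge joining an $A$-vertex to a $B$-vertex (the exact pairing being determined by the signs). Because Claim \ref{Min complexity realization} supplies a realization whose total cyclic word length equals $c(\Sigma;\partial D_A\cup\partial D_B,\partial D_X\cup\partial D_Y)$, no geometric cancellation can hide behind the algebra and these subword counts are the actual edge multiplicities of $G$.

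Next I would read the two relators of (\ref{second pres}) off and watch for long same-letter blocks. The first relator $A^8B^7(A^8B^7A^5B^2A^5B^7)^2$ contains the blocks $A^8$ and $B^7$, each of length at least two, so it already supplies edges of both the $A^+$-$A^-$ and the $B^+$-$B^-$ type; the second relator $A^5B^9(A^5B^9A^5B^2a^3B^2)^2$ does the same via the blocks $A^5$, $B^9$, and $a^3$. Consequently the label $c$ counting $A^+$-to-$A^-$ edges and the label $d$ counting $B^+$-to-$B^-$ edges in the Figure \ref{Dist3Fig8b}a template are both strictly positive. Since the two forms in Figure \ref{Dist3Fig8b} other than (a) are distinguished from (a) by the vanishing of (at least) one of these two labels, they are excluded, and $G(D_A,D_B\,|\,\partial D_X,\partial D_Y)$ must take the form of Figure \ref{Dist3Fig8b}a with $c,d>0$.

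The main obstacle is not so much the combinatorics of the word as it is ensuring that the subword counts honestly equal the geometric edge multiplicities of $G$, i.e.\ that presentation (\ref{second pres}) has been realized with no spurious folds or cancellations; this is precisely why Claim \ref{Min complexity realization} was established first, after which the remainder of the argument is routine bookkeeping.
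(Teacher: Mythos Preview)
Your argument is correct and follows essentially the same route as the paper's own proof. Both proofs invoke Lemma~\ref{3 types of graphs which occur} to reduce to the three templates of Figure~\ref{Dist3Fig8b}, and both eliminate template~(b) by observing that each generator occurs in the relators of~(\ref{second pres}) with exponents of absolute value greater than one (equivalently, that both $A^+$--$A^-$ and $B^+$--$B^-$ edges are present). The only difference is in how template~(c) is excluded: the paper appeals directly to the minimal complexity furnished by Claim~\ref{Min complexity realization} (a diagram of minimal complexity carries no wave, hence no loop edge as in Figure~\ref{Dist3Fig8b}c), whereas you exclude~(c) via the same structural observation used for~(b), namely that the presence of both $A^+$--$A^-$ and $B^+$--$B^-$ edges already forces form~(a). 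That implication is indeed a feature of Figure~\ref{Dist3Fig8b}, and the paper itself uses it verbatim in the proof of Claim~\ref{DA, DB is a set of SUMS} inside Theorem~\ref{4 pairs of rectangles}, so your shortcut is legitimate.
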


\begin{proof} [Proof of Claim \emph{\ref{Graph of min complexity realization}}]
By Lemma \ref{3 types of graphs which occur}, $G(D_A,D_B\,|\,\partial D_X,\partial D_Y)$ has the form of one of the three graphs in Figure \ref{Dist3Fig8b}. Since $D(D_A,D_B\,|\,\partial D_X,\partial D_Y)$ has minimal complexity, $G(D_A,D_B\,|\,\partial D_X,\partial D_Y)$ does not have the form of Figure \ref{Dist3Fig8b}c. And, because both generators appear in the relators of (\ref{second pres}) with exponents having absolute value greater than one, $G(D_A,D_B\,|\,\partial D_X,\partial D_Y)$ does not have the form of Figure \ref{Dist3Fig8b}b. It follows $G(D_A,D_B\,|\,\partial D_X,\partial D_Y)$ has the claimed form.
\end{proof}

Finally, we can produce an R-R diagram $\mathcal{D}$ with $D(D_A,D_B\,|\,\partial D_X,\partial D_Y)$ as its underlying Heegaard diagram. R-R diagrams with  $D(D_A,D_B\,|\,\partial D_X,\partial D_Y)$ as their underlying Heegaard diagram are parametrized by the isotopy class of an essential separating simple closed curve $\Gamma$ in the Heegaard surface $\Sigma$ which is disjoint from $D_A$ and $D_B$. Since $G(D_A,D_B\,|\,\partial D_X,\partial D_Y)$ has the form of Figure~\ref{Dist3Fig8b}a, it is natural to take $\Gamma$ to be the unique separating simple closed curve in $\Sigma$, which is disjoint from $\partial D_A$ and $\partial D_B$, and is also disjoint from any edge of $D(D_A,D_B\,|\,\partial D_X,\partial D_Y)$ connecting  $D_A^+$ to $D_A^-$ or $D_B^+$ to $D_B^-$. Thus $\Gamma$ becomes the essential separating simple closed curve in $\Sigma$, disjoint from $\partial D_A$ and $\partial D_B$, which intersects $\partial D_X$ and $\partial D_Y$ minimally.

\smallskip
Then in order to determine the form of $\mathcal{D}$ we need to:
\begin{enumerate}
\item Determine the labels for each face of the hexagons representing the A-handle and B-handle of $\mathcal{D}$;
\item Determine how the cyclic orders of the labeling of the faces of the A-handle hexagon and the B-handle hexagon need to be coordinated;
\item Determine how many connections lie in each isotopy class of connections on each handle of $\mathcal{D}$;
\item Determine how to connect endpoints of connections on the A-handle with endpoints of connections on the B-handle of $\mathcal{D}$.
\end{enumerate}

All of these items can be determined by scanning the relators of $\mathcal{P}$.  For example, suppose $G \in \{A,B\}$. Then, since $\Gamma$ is disjoint from edges of $D(D_A,D_B\,|\,\partial D_X,\partial D_Y)$ connecting $D_A^+$ to $D_A^-$ or $D_B^+$ to $D_B^-$, $G$ must appear in the relators of $\mathcal{P}$ with exponents having at most three absolute values, say  $e_1$, $e_2$ and $e_3$, with $e_2 = e_1+e_3$, and then the labels of the faces of the $G$-hexagon must be in clockwise cyclic order either $(e_1, e_2,e_3,-e_1,-e_2,-e_3)$ or $(e_3, e_2,e_1,-e_3,-e_2,-e_1)$.

Next, scanning the relators of $\mathcal{P}$ shows $A$ appears with exponents having absolute values $3$, $8$ and $5$, while $B$ appears with exponents having absolute values $2$, $9$ and $7$. Up to orientation reversing homeomorphism of $\mathcal{D}$, the cyclic order of the labeling of the faces of one of the handles of $\mathcal{D}$ may be chosen arbitrarily. So we may label the faces of the A-handle of $\mathcal{D}$ with $(5,8,3,-5,-8,-3)$ in clockwise cyclic order. Then, with the cyclic order of the labels of the A-handle specified, it is easy to see that, because $(A^8B^7)^{\pm1}$, $(B^7A^8)^{\pm1}$ and $(A^5B^2)^{\pm1}$ appear in $\mathcal{P}$, the faces of the B-hexagon of $\mathcal{D}$ must be labeled in clockwise cyclic order $(7,9,2,-7,-9,-2)$ if $\mathcal{D}$ is to realize $\mathcal{P}$.

Next, let $|G^{\pm e}|$, for $G \in \{A,B\}$, denote the total number of appearances of $G$ in the relators of $\mathcal{P}$ with exponent having absolute value $e$. Then $|A^{\pm5}| = 9$, $|A^{\pm8}| = 3$, $|A^{\pm3}| = 2$, $|B^{\pm7}| = 5$, $|B^{\pm9}| = 3$, $|B^{\pm2}| = 6$, and clearly these values determine the number of connections in each isotopy class of connections on the two handles of $\mathcal{D}$.

It remains to determine how edges of $\mathcal{D}$ connect endpoints of connections on the A-hexagon of $\mathcal{D}$ with endpoints of connections on the B-hexagon of $\mathcal{D}$. In general, each 2-syllable subword of the relators of $\mathcal{P}$ of the form $(A^mB^n)^{\pm1}$ with $m \in \{\pm5,\pm8,\pm3\}$ and $n \in \{\pm7,\pm9,\pm2\}$ corresponds to an edge of $\mathcal{D}$ which connects an endpoint of a connection in the face of the A-hexagon of $\mathcal{D}$ with label $-m$ to an endpoint of a connection in the face of the B-hexagon of $\mathcal{D}$ with label $n$. In particular, since $|A^{\pm5}| = 9$, there must be a set $\mathcal{S}$ of 9 edges in $\mathcal{D}$ connecting the 9 endpoints of connections in the face of the A-hexagon of $\mathcal{D}$ with label $-5$ to 9 consecutive endpoints of connections in the boundary of the B-hexagon of $\mathcal{D}$. 

One can see where these 9 consecutive endpoints of edges in $\mathcal{S}$ are located in the boundary of the B-hexagon in the following way. For each $e \in \{\pm 7,\pm 9, \pm2\}$, let $|(A^5B^e)^{\pm1}|$ denote the total number of appearances of two syllable subwords in the relators of $\mathcal{P}$ of the form $(A^5B^e)^{\pm1}$. A scan of the relators of $\mathcal{P}$ shows the only nonzero values in this set are: $|(A^5B^7)^{\pm1}| = 2$, $|(A^5B^9)^{\pm1}| = 3$ and $|(A^5B^2)^{\pm1}| = 4$. This implies the edges of $\mathcal{S}$ must appear in $\mathcal{D}$ so that $\mathcal{S}$ is the disjoint union of subsets of 2, 3 and 4 edges which meet the faces of the B-hexagon of $\mathcal{D}$ with labels 7, 9 and 2 respectively. It is easy to see there is only one way to do this. And this, in turn, implies that how edges of $\mathcal{D}$ connect endpoints of connections on the A-hexagon of $\mathcal{D}$ with endpoints of connections on the B-hexagon of $\mathcal{D}$ is completely determined.

The resulting R-R diagram $\mathcal{D}$ appears in Figure \ref{Dist3Fig9b}. One checks easily that $\mathcal{D}$ realizes (\ref{second pres}). And, by construction, it is an R-R diagram of a genus two Heegaard splitting of $M$.
\end{section}

\begin{lem}
\label{min length under auts}
Suppose $\mathcal{P} = \langle \, A,B \,|\, \mathcal{R}_1, \mathcal{R}_2 \, \rangle$ is a two-generator, two-relator presentation in which both $\mathcal{R}_1$ and $\mathcal{R}_2$ are cyclically reduced cyclic words.  If there is an automorphism of the free group $F(A,B)$ generated by $A$ and $B$ which reduces the length of $\mathcal{P}$, then one of the four Whitehead automorphisms
\begin{gather}
(A,B) \mapsto (AB^{\pm1},B) \\
(A,B) \mapsto (A,BA^{\pm1})
\end{gather}
will also reduce the length of $\mathcal{P}$.
\end{lem}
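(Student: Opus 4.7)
The plan is to invoke Whitehead's classical Peak Reduction Theorem: for any finite tuple of cyclic words in a free group $F_n$, if some automorphism of $F_n$ strictly reduces the total cyclic length, then some \emph{Whitehead automorphism} of $F_n$ does. Applied to the pair $(\mathcal{R}_1, \mathcal{R}_2)$ in $F(A,B)$, this immediately reduces the lemma to inspecting the finite list of Whitehead automorphisms of $F_2$ and verifying that, as far as cyclic length is concerned, the only ones capable of decreasing length are the four listed.

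Next I would recall the classification of Whitehead automorphisms into two kinds. Type~(1) are the permutations of $\{A^{\pm 1}, B^{\pm 1}\}$ which commute with $x \mapsto x^{-1}$; these preserve the length of every cyclic word, so they are irrelevant here. Type~(2) are parametrized by a \emph{multiplier} $m \in \{A^{\pm 1}, B^{\pm 1}\}$ and a subset $S$ of $\{A^{\pm 1}, B^{\pm 1}\}$ with $m \in S$ and $m^{-1} \notin S$, acting as $x \mapsto x$ if $x, x^{-1} \notin S$, $x \mapsto xm$ if $x \in S$ and $x^{-1} \notin S$, $x \mapsto m^{-1}x$ if $x \notin S$ and $x^{-1} \in S$, and $x \mapsto m^{-1}xm$ if both $x, x^{-1} \in S$ (an inner automorphism, which preserves cyclic length).

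Then I would simply enumerate the possibilities for $F_2$. Up to type~(1) permutations (swapping $A \leftrightarrow B$ and inverting generators) and modulo inner automorphisms, every type~(2) Whitehead automorphism acts on cyclic words in the same way as one of
$$ (A,B) \mapsto (AB^{\pm 1}, B), \qquad (A,B) \mapsto (A, BA^{\pm 1}). $$
In particular, the ``left-multiplication'' variants $(A,B) \mapsto (BA^{\pm 1}, B)$ and $(A,B) \mapsto (A, A^{\pm 1}B)$ differ from those on the list by a conjugation by $A$ or $B$, which has no effect on the cyclic lengths of $\mathcal{R}_1$ and $\mathcal{R}_2$. Hence if the length of $\mathcal{P}$ can be reduced by any automorphism at all, Peak Reduction produces a Whitehead automorphism that reduces it, and the case analysis above shows that one of the four listed automorphisms already accomplishes this.

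The only real obstacle is a careful bookkeeping of the type~(2) Whitehead automorphisms of $F_2$ and the observation that left- and right-multiplication variants agree modulo inner automorphisms; the serious work is done by Peak Reduction, which may be cited directly.
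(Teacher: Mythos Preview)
Your proposal is correct and follows essentially the same approach as the paper: the paper's proof consists solely of the sentence ``This is a well-known result of Whitehead. See [W] or [LS],'' and your argument is precisely the unpacking of that citation---invoke Peak Reduction to reduce to Whitehead automorphisms, discard type~(1) as length-preserving, and observe that in rank~two the type~(2) automorphisms act on cyclic words exactly as the four listed maps (the left-multiplication variants being conjugate to the right-multiplication ones). One minor remark: you do not actually need to quotient by type~(1) permutations in your enumeration, since running through all four multipliers $A^{\pm1},B^{\pm1}$ already yields the four listed maps directly (modulo inner automorphisms); but this does not affect the argument.
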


\begin{proof}
This is a well-known result of Whitehead. See \cite{W} or \cite{LS}.
\end{proof}

\subsection{The two genus two splittings of $\boldsymbol{M}$ are not homeomorphic} 

\begin{prop} \label{splittings are not homeomorphic}
The genus two Heegaard splittings of $M$ determined by the R-R diagrams of Figures \emph{\ref{Dist3Fig9a}} and \emph{\ref{Dist3Fig9b}} are not homeomorphic.
\end{prop}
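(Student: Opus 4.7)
The plan is to use the strict uniqueness of SUMS, established in Corollary \ref{Sets of SUMS exist}, as the main invariant. By definition a SUMS is the unique complete cutting system of its handlebody that is strictly shorter than any other against every cutting system of the opposite handlebody, so its isotopy class is a topological invariant of the handlebody-in-splitting.

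Assume toward contradiction that a homeomorphism $h\colon M \to M$ sends $\Sigma_1$ to $\Sigma_2$. Then $h$ sends the handlebody $H$ of the first splitting, which contains a SUMS by Corollary \ref{Sets of SUMS exist}, to one of the two handlebodies bounded by $\Sigma_2$, and that image must also contain a SUMS. The first step is to rule out the possibility $h(H_1) = H_2'$; equivalently, to show that $H'$ does not contain a SUMS in either splitting. I expect to do this by displaying, for each splitting, two non-isotopic complete cutting systems of $H'$ that tie for the minimal complexity against $\{D_A,D_B\}$. With this in hand, $h(H_1) = H_2$ and, by strict uniqueness of SUMS, $h(\{D_A,D_B\}_1) = \{D_A,D_B\}_2$.

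Given this rigidity, $h$ induces a homeomorphism of pairs $(\Sigma_1, \{D_A, D_B\}_1) \to (\Sigma_2, \{D_A, D_B\}_2)$, and by Claim \ref{Min complexity realization} the associated minimal-complexity Heegaard diagrams, together with the two-generator, two-relator Whitehead-minimal presentations of $\pi_1(M)$ they determine, must correspond under $h$. The plan is then to read these two presentations off Figures \ref{Dist3Fig9a} and \ref{Dist3Fig9b} and verify that they are inequivalent under any permutation and inversion of the generators $\{A,B\}$ combined with cyclic permutation and inversion of relators. A convenient finite invariant is the multiset of absolute values of exponents appearing on each hexagon of the R-R diagram (equivalently, the multisets $\{\,|A^{\pm e}|\,\}$ and $\{\,|B^{\pm e}|\,\}$ in the relators); if these differ between the two splittings, or even if only the total lengths of the presentations differ, then no such correspondence exists and we obtain the required contradiction.

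The main obstacle will be the asymmetry step: showing that the $H'$ side of each splitting does not itself admit a SUMS. Without this, one would also have to separately handle the case $h(H_1) = H_2'$ and distinguish an alternative pair of minimal-complexity presentations, substantially complicating the invariant-comparison stage. Once the asymmetry is in place, the concluding combinatorial comparison of the two Whitehead-minimal presentations is a finite inspection of the labelings on the hexagons of Figures \ref{Dist3Fig9a} and \ref{Dist3Fig9b}.
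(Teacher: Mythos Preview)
Your overall strategy---pin down a canonical minimal Heegaard diagram on each splitting surface and then compare numerical invariants---is exactly the paper's strategy, and your proposed final invariant (total length of the Whitehead-minimal presentation) is precisely the complexity the paper uses: it computes $121$ for Figure~\ref{Dist3Fig9a} and $149$ for Figure~\ref{Dist3Fig9b}.

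However, your ``asymmetry step'' is both unnecessary and, as you have planned it, unworkable. It is unnecessary because the minimal complexity of a Heegaard diagram is already an invariant of the \emph{unordered} splitting $(\Sigma;V,W)$: any homeomorphism carrying $\Sigma_1$ to $\Sigma_2$, regardless of whether it sends $H_1$ to $H_2$ or to $H_2'$, must preserve this number. So there is no need to rule out $h(H_1)=H_2'$ before comparing complexities. It is unworkable because your intended method---exhibiting two non-isotopic cutting systems of $H'$ that tie against $\{D_A,D_B\}$---will fail: the paper checks (and you can verify directly from the hexagon labels) that in both R-R diagrams the graph $G(D_X,D_Y\,|\,\partial D_A,\partial D_B)$ has the form of Figure~\ref{Dist3Fig8b}a with $c>a+b>0$ and $d>a+b>0$, so by Lemma~\ref{c > a+b > 0 and d > a+b > 0 imply minimal} the pair $\{D_X,D_Y\}$ is the \emph{unique} complete cutting system of $H'$ minimizing intersection with $\partial D_A\cup\partial D_B$. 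There are no ties.

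The paper instead uses this very uniqueness on the $H'$ side, together with the SUMS property of $\{D_A,D_B\}$ on the $H$ side from Corollary~\ref{Sets of SUMS exist}, to conclude that $\{\partial D_A,\partial D_B,\partial D_X,\partial D_Y\}$ is the unique minimal-complexity Heegaard diagram on each $\Sigma_i$. Comparing $121\neq 149$ then finishes the proof in one line, with no side-matching case analysis.
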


\begin{proof}
Consider how the simple closed curves $\partial D_P$, for $P \in \{A,B\}$ intersect the bands of connections $F_P \cap (\partial D_X \cup \partial D_Y)$ in $F_P$ in Figure \ref{Dist3Fig9a} and Figure~\ref{Dist3Fig9b}. It is not hard to see that, in both Figures \ref{Dist3Fig9a} and \ref{Dist3Fig9b}, the graph $G(D_X,D_Y\,|\, \partial D_A, \partial D_B)$ of the Heegaard diagram $D(D_X,D_Y\,|\, \partial D_A, \partial D_B)$ of $\partial D_A$ and $\partial D_B$ with respect to $D_X$ and $D_Y$ has the form of Figure \ref{Dist3Fig8b}a with $c > a+b > 0$ and $d > a+b > 0$. It follows from Lemma \ref{c > a+b > 0 and d > a+b > 0 imply minimal}, that, in both diagrams, the set of cutting disks $\{D_X,D_Y\}$ of $H'$ is the unique complete set of cutting disks of $H'$ intersecting $\partial D_A \cup \partial D_B$ minimally. 

Next, since Corollary \ref{Sets of SUMS exist} shows that, in both Figure \ref{Dist3Fig9a} and Figure \ref{Dist3Fig9b}, $\{D_A,D_B\}$ is a set of SUMS, it follows that, in both Figures \ref{Dist3Fig9a} and Figure \ref{Dist3Fig9b}, the set of simple closed curves $\{\partial D_A, \partial D_B, \partial D_X, \partial D_Y\}$ form the unique minimal complexity Heegaard diagram carried by the splitting surface $\Sigma$. However, in the Heegaard surface $\Sigma$ of Figure \ref{Dist3Fig9a}, $|(\partial D_A \cup \partial D_B) \cap (\partial D_X \cup \partial D_Y)|$ = 121, while in the Heegaard surface $\Sigma$ of Figure~\ref{Dist3Fig9b}, $|(\partial D_A \cup \partial D_B) \cap (\partial D_X \cup \partial D_Y)|$ = 149. Since these minimal complexities differ, the Heegaard surfaces of $M$ in Figures \ref{Dist3Fig9a} and \ref{Dist3Fig9b} are not homeomorphic.
\end{proof}

\end{document}